\documentclass[11pt]{amsart}
\usepackage{amssymb,amsmath,amsthm,amsfonts,amsopn,url,color,hyperref,enumitem,tikz-cd}
\usepackage[all]{xy}
\usetikzlibrary{decorations.markings}
\tikzset{crossout/.style={
        decoration={markings,
            mark= at position 0.5 with {
                \node[transform shape] (tempnode) {$\backslash$};}},
        postaction={decorate}}}
\newcommand{\tikst}[1]{\begin{tabular}{@{}c@{}}#1\end{tabular}}

\theoremstyle{plain}
\newtheorem{thm}{Theorem}[section]
\newtheorem{prop}[thm]{Proposition}
\newtheorem{lemma}[thm]{Lemma}
\newtheorem{cor}[thm]{Corollary}

\theoremstyle{definition}
\newtheorem{defn}[thm]{Definition}
\newtheorem*{defn*}{Definition}
\newtheorem*{question*}{Question}
\newtheorem{question}{Question}
\newtheorem{example}[thm]{Example}
\newtheorem*{example*}{Example}

\newtheorem*{rmk*}{Remark}
\newcommand{\field}[1]{\mathbb{#1}}
\newcommand{\N}{\field{N}}
\newcommand{\Z}{\field{Z}}
\newcommand{\Q}{\field{Q}}
\newcommand{\R}{\field{R}}
\newcommand{\C}{\field{C}}
\newcommand{\A}{\field{A}}
\newcommand{\ideal}[1]{\mathfrak{#1}}
\newcommand{\m}{\ideal{m}}
\newcommand{\n}{\ideal{n}}
\newcommand{\p}{\ideal{p}}
\newcommand{\q}{\ideal{q}}

\newcommand{\func}[1]{\mathrm{#1} \,}

\newcommand{\Spec}{\func{Spec}}

\newcommand{\hgt}{\func{ht}}

\newcommand{\arrow}[1]{\stackrel{#1}{\rightarrow}}

\newcommand{\li}%{{\mathrm{lic}}}
 {\leftfootline}

\newcommand{\cO}{\mathcal{O}}
\renewcommand{\phi}{\varphi}

\DeclareMathOperator{\Cl}{Cl}
\DeclareMathOperator{\chr}{char}
\DeclareMathOperator{\Max}{Max}
\newcommand{\Rone}{(R$_1$)}
\newcommand{\Ronei}{$($\emph{R}$_1)$}

\author{Neil Epstein}
\address{Department of Mathematical Sciences \\ George Mason University \\ Fairfax, VA  22030}
\email{nepstei2@gmu.edu}

\author{Jay Shapiro}
\address{Department of Mathematical Sciences \\ George Mason University \\ Fairfax, VA  22030}
\email{jshapiro@gmu.edu}

\title{Perinormality -- a generalization of Krull domains}
\subjclass[2010]{13B21, 13F05, 13F45}
\keywords{Krull domain, going down, perinormal, globally perinormal, universally catenary}

\date{\today}
\begin{document}
\begin{abstract}
We introduce a new class of integral domains, the \emph{perinormal} domains, which fall strictly between Krull domains and weakly normal domains.  We establish basic properties of the class, and in the case of universally catenary domains we give equivalent characterizations of perinormality.  (Later on, we point out some subtleties that occur only in the non-Noetherian context.)  We also introduce and explore briefly the related concept of \emph{global} perinormality, including a relationship with divisor class groups.  Throughout, we provide illuminating examples from algebra, geometry, and number theory.
\end{abstract}

\maketitle

\section{Introduction}\label{sec:intro}
Motivated in part by the classical concept of a ring extension satisfying going-down from Cohen and Seidenberg \cite{CohSei}, the concept of the \emph{going-down domain}  has been fruitful in non-Noetherian commutative ring theory (see for example \cite{Dob-gd, Dob-gd2, DobPa-gd3}); for Noetherian rings it merely coincides with domains of dimension $\leq 1$ \cite[Proposition 7]{Dob-gd}.    By definition, a ring extension $R \subseteq S$ satisfies \emph{going-down} if  whenever $\p \subset \q$ are prime ideals of $R$ and $Q \in \Spec S$ with $Q \cap R = \q$, there is some prime ideal $P \in \Spec S$ with $P \subset Q$ and $P \cap R = \p$ (a condition that is satisfied whenever $S$ is flat over $R$).
Then an integral domain $R$ is a \emph{going-down domain} if for every (local) overring $S$ of $R$, the inclusion $R \subseteq S$ satisfies going-down. (In fact by \cite[Theorem 1]{DobPa-gd3}  it doesn't matter whether one specifies `local' or not.)

It is natural to ask which overrings of an integral domain $R$ satisfy going-down over it.  It is classical that any flat $R$-algebra (hence any flat overring) will satisfy going-down over $R$ \cite[Theorem 9.5]{Mats}.  Moreover, the flat local overrings are precisely the rings $R_\p$ where $\p$ is a prime ideal of $R$ \cite[Theorem 2]{Ri-gq}.  In this context, since going-down domains have proven to be a useful concept, it makes sense  to explore the orthogonal concept: \begin{itemize}
\item When does it happen that the \emph{only} local overrings that satisfy going-down over $R$ are the localizations at prime ideals?
\end{itemize} We call such a ring \emph{perinormal}, and it is the subject of this paper.  (The related concept of \emph{global perinormality} stipulates that the only overrings, local or not, that satisfy going-down over the base are localizations of the base ring at multiplicative sets.)

  It turns out that the class of perinormal rings is closely related to  Krull domains (and so Noetherian normal domains) and   weakly normal (hence seminormal) domains in that Krull domain $\Rightarrow$ perinormal $\Rightarrow$ weakly normal
   and \Rone, with neither implication reversible.  Moreover, for universally catenary domains (and somewhat more generally), we can characterize perinormal domains as those domains $R$ such that no prime localization $R_\p$ has an overring that induces a bijection on prime spectra.  When $R$ is smooth in codimension 1, we can restrict our attention to \emph{integral} overrings of these $R_\p$ (cf. Theorem~\ref{thm:equiv}).  On the other hand, the only perinormal going-down domains are Pr\"ufer domains.

The structure of the paper is as follows.  We start by establishing some basic facts in Section~\ref{sec:prelim}, including Proposition~\ref{pr:perilocal} which shows that perinormality is a local property and Proposition~\ref{pr:periflat}, which reframes perinormality in terms of flatness.  Section~\ref{sec:R1Krull} explores the relationship of perinormality to (generalized) Krull domains, weakly normal domains, and \Rone\ domains.  Theorem~\ref{thm:Krperi}, Corollary~\ref{cor:pnwn}, and  Proposition~\ref{pr:R1} respectively show that perinormality is implied by the first and implies the latter two properties.  We also exhibit some sharpening examples.  Section~\ref{sec:locNoeth} is dedicated to Theorem~\ref{thm:equiv}, which gives the two characterizations of perinormal domains among the Noetherian domains mentioned above. In Section~\ref{sec:gluing}, we find Theorem~\ref{thm:gluing}, which exhibits a method for producing perinormal domains that are not integrally closed.  Section~\ref{sec:global} is devoted to the related notion of \emph{global} perinormality; in particular, we give a partial characterization (see Theorem~\ref{thm:class}) of which Krull domains may be globally perinormal, along with examples relevant to algebraic number theory.  It turns out that the theory of perinormality is a bit different when one includes non-Noetherian rings; in Section~\ref{sec:nonNoeth}, we point out the subtleties in a series of examples, including the fact that not every integrally closed domain is perinormal (unlike in the Noetherian case).  We end with a list of interesting questions in Section~\ref{sec:Q}.

\noindent \textbf{Conventions:} All rings are commutative with identity, and ring homomorphisms and containments preserve the multiplicative identity.  The term \emph{local} means only that the ring has a unique maximal ideal.  An \emph{overring} of an integral domain $R$ is a ring sitting between $R$ and its fraction field.

\section{First properties}\label{sec:prelim}

\begin{defn}
Let $R$ be an integral domain.  We say $R$ is \emph{perinormal} if whenever $S$ is a local overring of $R$ such that the inclusion $R \subseteq S$ satisfies going-down, it follows that $S$ is a localization of $R$ (necessarily at a prime ideal).

We say that $R$ is \emph{globally perinormal} if the same conclusion holds when the condition on $S$ being local is dropped (so that this time, the localization is just at a multiplicative set).
\end{defn}

\begin{rmk*}
The term \emph{perinormal} is meant to reflect several aspects of the property:  (1) It is closely related to the properties of seminormality and weak normality (cf. Corollary~\ref{cor:pnwn}).  (2) It is closely related to the concept of normality in the Noetherian case (cf. Theorem~\ref{thm:Krperi}), which is usually the only situation where the word ``normal'' is used for integral closedness in the literature.  (3) Perinormality is \emph{not} a weakening of the property of integral closedness in general (cf. Example~\ref{ex:Hutchins}), whence the prefix \emph{peri-} (unlike \emph{weak} and \emph{semi-}, which both imply weakenings).
\end{rmk*}

\begin{lemma}\label{lem:surj}
A homomorphism of commutative rings $R \rightarrow S$ satisfies going-down if and only if for all $P \in \Spec S$, the induced map $\Spec (S_P) \rightarrow \Spec (R_{P \cap R})$ is surjective.
\end{lemma}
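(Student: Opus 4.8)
The plan is to reduce both conditions to the single combinatorial statement about contractions of primes, via the standard correspondence between the spectrum of a localization and the set of primes contained in the relevant prime. Fix $P \in \Spec S$ and set $\q := P \cap R$, writing $f \colon R \to S$ for the structure map. First I would check that $f$ induces a well-defined (local) homomorphism $R_\q \to S_P$: if $r \in R \setminus \q$, then $f(r) \notin P$ (otherwise $r \in f^{-1}(P) = \q$), so $f$ sends $R \setminus \q$ into $S \setminus P$ and thus extends to the localizations, yielding a commutative square relating $R \to S$ to $R_\q \to S_P$.

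The key identification is that, under the order-preserving bijections $\Spec(R_\q) \leftrightarrow \{\p \in \Spec R : \p \subseteq \q\}$ and $\Spec(S_P) \leftrightarrow \{P' \in \Spec S : P' \subseteq P\}$, the induced map $\Spec(S_P) \to \Spec(R_\q)$ is exactly the contraction $P' \mapsto P' \cap R$. This is where the one genuine (if routine) verification lies: localization commutes with contraction, so the prime of $R_\q$ lying under $P' S_P$ corresponds to $f^{-1}(P') = P' \cap R$, which indeed sits inside $\q$ because $P' \subseteq P$. Once this dictionary is in place, surjectivity of $\Spec(S_P) \to \Spec(R_\q)$ translates verbatim into the assertion that every prime $\p \subseteq \q = P \cap R$ of $R$ is the contraction of some prime $P' \subseteq P$ of $S$.

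With this in hand, both directions are immediate. For the direction ($\Leftarrow$), given primes $\p \subset \q$ of $R$ and $Q \in \Spec S$ with $Q \cap R = \q$, I apply surjectivity at $P := Q$ to obtain $P' \subseteq Q$ with $P' \cap R = \p$, which is precisely going-down. For ($\Rightarrow$), I fix $P$, set $\q = P \cap R$, and take an arbitrary prime $\p \subseteq \q$; the case $\p = \q$ is witnessed by $P$ itself, while if $\p \subsetneq \q$ then going-down, applied with $Q = P$, produces the required $P' \subseteq P$ contracting to $\p$. Hence the map is surjective for every $P$.

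I expect the only real obstacle to be the bookkeeping in the key identification --- verifying that the spectral map induced by $R_\q \to S_P$ is genuinely the contraction map and that the two bijections are compatible with it --- together with correctly absorbing the trivial case $\p = \q$, since the paper's definition of going-down is phrased with the strict inequality $\p \subset \q$. Everything else is a direct unwinding of definitions.
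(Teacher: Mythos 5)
Your proposal is correct and is exactly the unwinding that the paper has in mind when it says the lemma ``follows immediately from the definition'': identifying $\Spec(S_P)$ and $\Spec(R_{P\cap R})$ with the sets of primes contained in $P$ and $P\cap R$ respectively, under which the induced map becomes contraction and surjectivity becomes the going-down condition (with the trivial case $\p=\q$ handled by $P$ itself). No substantive difference from the paper's (omitted) argument.
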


\begin{proof}
This follows immediately from the definition.
\end{proof}

Recall the following theorem of Richman.
\begin{thm}[{\cite[Theorem 2]{Ri-gq}}]\label{thm:Rich}
Let $A$ be an integral domain and $B$ an overring of $A$. Then $B$ is flat over $A$ if and only if $B_\m = A_{\m\cap A}$ for all maximal ideals $\m$ of $B$.
\end{thm}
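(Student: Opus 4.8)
The plan is to prove the two implications separately, with the forward direction carrying all the content. For the reverse direction, assume $B_\m = A_{\m\cap A}$ for every maximal ideal $\m$ of $B$. Here I would invoke the local criterion for flatness (see \cite{Mats}): an $A$-algebra $B$ is $A$-flat precisely when, for each maximal ideal $\m$ of $B$, the localization $B_\m$ is flat over $A_{\m\cap A}$. Under the hypothesis each $B_\m$ literally equals the local ring $A_{\m\cap A}$, so it is flat over it via the identity map; hence $B$ is flat over $A$. No subtlety arises here.

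For the forward direction, assume $B$ is $A$-flat, fix a maximal ideal $\m$ of $B$, and set $\p = \m\cap A$. Working inside $K = \Frac A$, I would first check the containment $A_\p \subseteq B_\m$: any $s \in A\setminus\p$ lies outside $\m$ (as $s\in\m$ would force $s\in\m\cap A=\p$), hence $s$ is a unit in $B_\m$, so every fraction $a/s$ with $a\in A$, $s\in A\setminus\p$ already lies in $B_\m$. Next, localizing the flat map $A\to B$ shows $A_\p\to B_\m$ is flat; and since $\p\subseteq\m$ we have $\p B_\m\subseteq\m B_\m\subsetneq B_\m$, so this is a flat local homomorphism, therefore faithfully flat. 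This faithful-flatness upgrade is the one structural point that must be secured before the main computation.

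The crux is the reverse containment $B_\m\subseteq A_\p$. Take $x\in B_\m\subseteq K$ and write $x = a/s$ with $a,s\in A$ and $s\neq 0$, so that the relation $sx = a$ holds in $B_\m$; this exhibits $a$ as an element of $s B_\m\cap A_\p$. Faithful flatness of $A_\p\to B_\m$ gives the contraction identity $s B_\m\cap A_\p = s A_\p$ (the statement that the extension of the principal ideal $(s)A_\p$ contracts back to itself), so $a = s a'$ for some $a'\in A_\p$. Cancelling the nonzero $s$ in the domain $B_\m$ yields $x = a'\in A_\p$, and therefore $B_\m = A_\p = A_{\m\cap A}$, as desired.

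I expect the only genuine obstacle to be ensuring that faithful (not merely plain) flatness is available, since the key identity $IB\cap A = I$ can fail for flat-but-not-faithfully-flat extensions; this is exactly why the local-homomorphism observation in the second paragraph is needed. A secondary point worth flagging, given this paper's interest in the non-Noetherian setting, is that the argument deliberately avoids any appeal to dimension theory or Noetherian hypotheses, using only the cancellation property of the domain $B_\m$ together with the single faithfully-flat contraction identity, so it applies to arbitrary domains $A$ and overrings $B$.
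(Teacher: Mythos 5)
Your proof is correct. Note that the paper itself offers no argument for this statement --- it is quoted verbatim from Richman with a citation --- so there is no in-paper proof to compare against; your write-up is a valid, self-contained substitute. Each of the three ingredients you lean on is standard and correctly applied: the local criterion (flatness of $B$ over $A$ can be checked on the localizations $B_\m$ over $A_{\m\cap A}$ as $\m$ ranges over $\Max B$) disposes of the easy direction; a flat local homomorphism of local rings is faithfully flat, which you correctly secure from $\p B_\m \subseteq \m B_\m \subsetneq B_\m$; and the contraction identity $I S \cap R = I$ for faithfully flat $R \to S$, applied to the principal ideal $sA_\p$, is exactly what converts the relation $sx = a$ into $x \in A_\p$ after cancelling $s$ in the domain. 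You are also right to flag faithful flatness as the load-bearing point: plain flatness of $A_\p \to B_\m$ would not give the contraction identity (e.g.\ $A \to K$ is flat but $sK \cap A \neq sA$). For what it is worth, Richman's original argument takes a different route, working with the ideal of denominators $(A :_A b)$ of an element $b$ of a flat overring and showing it extends to the unit ideal; your faithfully-flat-contraction argument is arguably cleaner and, as you note, uses no Noetherian or dimension-theoretic input, consistent with the generality in which the theorem is stated and used in this paper.
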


%expand the following?
This theorem allows us to characterize perinormality in terms of flatness:

\begin{prop}\label{pr:periflat}%thank ref
A domain $R$ is perinormal if and only if every overring of $R$ that satisfies going-down is flat over $R$.
\end{prop}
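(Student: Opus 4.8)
The plan is to route both implications through Richman's theorem (Theorem~\ref{thm:Rich}), which says that an overring $S$ of $R$ is flat precisely when $S_\m = R_{\m \cap R}$ for every maximal ideal $\m$ of $S$. Thus in each direction the task reduces to comparing the local rings $S_\m$ with the prime localizations of $R$, and the definition of perinormality supplies exactly the bridge between ``going-down'' and ``localization at a prime.''

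For the direction asserting that going-down overrings are flat when $R$ is perinormal, I would start with an arbitrary overring $S$ satisfying going-down and fix a maximal ideal $\m$ of $S$. The key step is to observe that $R \subseteq S_\m$ again satisfies going-down: the localization map $S \into S_\m$ is flat, hence satisfies going-down, and going-down is stable under composition (given $\p \subset \q$ in $R$ and $Q \in \Spec S_\m$ lying over $\q$, contract $Q$ to $S$, lift through $R \subseteq S$, then lift again through $S \subseteq S_\m$). Since $S_\m$ is now a \emph{local} overring of $R$ satisfying going-down, perinormality forces $S_\m = R_\q$ for some prime $\q$ of $R$. Comparing maximal ideals identifies $\q$: the maximal ideal of $S_\m$ contracts to $\m \cap R$ in $R$, while that of $R_\q$ contracts to $\q$, so $\q = \m \cap R$ and $S_\m = R_{\m \cap R}$. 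As $\m$ was arbitrary, Theorem~\ref{thm:Rich} yields flatness of $S$.

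The converse is immediate: if every going-down overring is flat, and $S$ is a local overring satisfying going-down with unique maximal ideal $\m$, then $S$ is flat over $R$, and Theorem~\ref{thm:Rich} applied to the single maximal ideal $\m$ gives $S = S_\m = R_{\m \cap R}$, exhibiting $S$ as a localization of $R$ at a prime. This is exactly the conclusion required by the definition of perinormality.

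The only real content — and the step most in need of care — is the stability of going-down under the composition $R \subseteq S \subseteq S_\m$, together with the correct identification of the contracted prime $\q = \m \cap R$; everything else is a direct invocation of Theorem~\ref{thm:Rich}. One could alternatively handle the surjectivity bookkeeping through Lemma~\ref{lem:surj}, but the composition argument seems cleanest.
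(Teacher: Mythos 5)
Your proposal is correct and follows essentially the same route as the paper's own proof: localize a going-down overring $S$ at each of its maximal ideals, use transitivity of going-down to see $R \subseteq S_\m$ is going-down, invoke perinormality to get $S_\m = R_{\m\cap R}$, and conclude by Theorem~\ref{thm:Rich}; the converse is the same one-line application of that theorem. The only difference is that you spell out the identification $\q = \m\cap R$ in slightly more detail (and you correctly take $\m$ to be a maximal ideal of $S$, which is what the paper intends).
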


\begin{proof}
Suppose $R$ is perinormal.  Let $S$ be an overring of $R$ that satisfies going-down over $R$.  Let $\m$ be a maximal ideal of $R$.  Clearly $S_\m$ satisfies going-down over $R$ (since going-downness is transitive and $S_\m$ is going-down over $S$), so by perinormality, $S_\m$ is a localization of $R$, so that necessarily, $S_\m = R_{\m \cap R}$.  Then by Theorem~\ref{thm:Rich}, $S$ is flat over $R$.

Conversely, suppose every overring of $R$ that satisfies going-down is flat over $R$.  Let $(S,\m)$ be a local overring of $R$ that satisfies going-down.  Then it is flat over $R$, so again by Theorem~\ref{thm:Rich}, $S = R_{\m \cap R}$ is a localization of $R$.  Thus, $R$ is perinormal.
\end{proof}

Next, we show that perinormality is a local property.

\begin{prop}\label{pr:perilocal}
If $R$ is perinormal, so is $R_W$ for every multiplicative set $W$.  Conversely, if $R_\m$ is perinormal for all maximal ideals $\m$ of $R$, then so is $R$.
\end{prop}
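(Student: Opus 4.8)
The plan is to prove the two implications separately, handling the forward direction by bare-hands manipulation of going-down and the converse via the flatness reformulation of Proposition~\ref{pr:periflat}, which turns perinormality into a local condition.

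For the forward implication, I would suppose $R$ is perinormal and let $S$ be a local overring of $R_W$ for which $R_W \subseteq S$ satisfies going-down. Since $R_W$ and $S$ both sit inside $\Frac(R)$, the ring $S$ is also an overring of $R$. The inclusion $R \subseteq R_W$ is a localization, hence flat, hence going-down; combining this with $R_W \subseteq S$ and the transitivity of going-down (already invoked in the proof of Proposition~\ref{pr:periflat}), the composite $R \subseteq S$ satisfies going-down. Perinormality of $R$ then forces $S = R_\p$ for some prime $\p$ of $R$. Finally I would observe that $R_W \subseteq R_\p$ means every element of $W$ is a unit in $R_\p$, i.e. $W \cap \p = \emptyset$; hence $\p$ survives as a prime $\p R_W$ of $R_W$ with $(R_W)_{\p R_W} = R_\p = S$, exhibiting $S$ as a localization of $R_W$ and settling the first claim.

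For the converse, assume $R_\m$ is perinormal for every maximal ideal $\m$, and let $S$ be an overring of $R$ satisfying going-down. By Proposition~\ref{pr:periflat} it suffices to show $S$ is flat over $R$, and flatness may be checked locally: $S$ is $R$-flat if and only if $S \otimes_R R_\m = S_{R \setminus \m}$ is $R_\m$-flat for every maximal $\m$. So I would fix $\m$ and study the base-changed inclusion $R_\m \subseteq S_{R \setminus \m}$, noting that $S_{R\setminus\m}$ is an overring of $R_\m$ inside $\Frac(R)$. The crucial step is that going-down localizes: given $\p_1 \subset \p_2$ in $\Spec R_\m$ and $Q_2 \in \Spec S_{R\setminus\m}$ over $\p_2$, I would view these as primes of $R$ and $S$, apply going-down of $R \subseteq S$ to produce $Q_1 \subset Q_2$ over $\p_1$, and check that $Q_1$ survives in $S_{R\setminus\m}$ because $Q_1 \cap R \subseteq \p_2 \subseteq \m$. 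Thus $R_\m \subseteq S_{R\setminus\m}$ satisfies going-down, and perinormality of $R_\m$ (via Proposition~\ref{pr:periflat} again) makes $S_{R\setminus\m}$ flat over $R_\m$. Running this over all $\m$ yields that $S$ is flat over $R$, as needed.

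The two directions are not equally hard: the forward one is essentially bookkeeping with transitivity together with the disjointness $W \cap \p = \emptyset$. The main obstacle is the localization step in the converse, namely verifying cleanly that going-down descends to the base change $R_\m \subseteq S_{R\setminus\m}$ and that this matches the standard local flatness criterion applied to $S$ as an $R$-module. Once that compatibility is pinned down, Proposition~\ref{pr:periflat} does the rest.
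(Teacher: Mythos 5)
Your proof is correct. The forward direction is essentially the paper's: the authors also reduce to showing $R \subseteq S$ is going-down (they justify it by noting that every prime of $R$ lain over by a prime of $S$ misses $W$, which is the same content as your transitivity argument, since $R\subseteq R_W$ is flat), and then, as you do, observe that the resulting localization of $R$ is automatically a localization of $R_W$ because $W$ consists of units in $S$. Where you genuinely diverge is the converse. The paper argues directly from the definition: given a local going-down overring $(S,\n)$ of $R$, it picks a maximal ideal $\m \supseteq \n\cap R$, notes that $R_\m \subseteq S$ (every element of $R\setminus\m$ is a unit in $S$) and that this inclusion still satisfies going-down, and then invokes perinormality of $R_\m$ to get $S=(R_\m)_{\n\cap R_\m}=R_{\n\cap R}$. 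You instead route through Proposition~\ref{pr:periflat}: you show every going-down overring $S$ of $R$ is flat by checking that each base change $R_\m\subseteq S_{R\setminus\m}$ is still going-down (your survival check $Q_1\cap R=\p_1\subseteq\m$ is the right one) and applying the flatness criterion locally. Both arguments are sound and there is no circularity, since Proposition~\ref{pr:periflat} precedes this statement and does not depend on it. The paper's converse is shorter and avoids the local flatness criterion entirely by exploiting that $S$ is already local, so one only ever needs a single maximal ideal $\m$; your version costs a little more machinery but has the mild advantage of treating arbitrary (not necessarily local) going-down overrings uniformly, which is the same localization-of-going-down computation the paper later reuses in the globally perinormal setting.
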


\begin{proof}
%First suppose $R$ is perinormal.  Let $S$ be a local overring of $R_W$ that satisfies going-down.  Let $Q \in \Spec S$.  Then by Lemma~\ref{lem:surj}, the map $\Spec S_Q \rightarrow \Spec (R_W)_{Q \cap R_W}$ is surjective.  But $(R_W)_{Q \cap R_W} = R_{Q \cap R}$ canonically, so that the map $\Spec S_Q \rightarrow R_{Q\cap R}$ is surjective.  Since $Q\in \Spec S$ was arbitrarily chosen, Lemma~\ref{lem:surj} applies again to show that the map $R \rightarrow S$ satisfies going-down.  Thus, as $R$ is perinormal, $S$ must be a localization of $R$.  That is, $S = R_P$ for some $P \in \Spec R$.  But since $R_W \subseteq S$, we have $W \cap P = \emptyset$, so that $PR_W$ is a prime ideal of $R_W$ and $S=R_P = (R_W)_{PR_W}$, finishing the proof that $R_W$ is perinormal.
First suppose $R$ is perinormal.  Let $S$ be a local overring of $R_W$ that satisfies going-down.   Then $S$ satisfies going-down over $R$ (since no prime ideal of $R$ lain over by a prime of $S$ can intersect $W$) and so $S=R_V$ for some multiplicatively closed subset $V$ of $R$.  But then $V$ is also a multiplicatively closed subset of $R_W$, and $S=(R_W)_V$.  Therefore $R_W$ is perinormal.

Conversely, suppose that $R_\m$ is perinormal for all maximal ideals $\m$ of $R$.  Let $(S,\n)$ be a local overring of $R$ such that the inclusion $R \subseteq S$ satisfies going-down.  Let $\m$ be a maximal ideal of $R$ such that $\n \cap R \subseteq \m$.  Then $R_\m \subseteq S$ satisfies going-down, so that by perinormality of $R_\m$, $S=(R_\m)_{\n \cap R_\m} = R_{\n \cap R}$.  Thus $R$ is perinormal.
\end{proof}

\begin{example}
Any valuation domain $R$ is globally perinormal because \emph{every} overring of $R$ is a localization, as is easily shown.  It then follows from Proposition~\ref{pr:perilocal} that every Pr\"ufer domain is perinormal.
\end{example}

\section{\Rone\ domains, weakly normal domains, and generalized Krull domains}\label{sec:R1Krull}

In this section, we fit perinormality into the context of three known important classes of integral domains.  Namely, generalized Krull $\implies$ Krull $\implies$ perinormal $\implies$ weakly normal and \Rone, with neither arrow reversible. 

\begin{defn}
We say that a commutative ring $R$ satisfies \Rone\ if $R_P$ is a valuation domain whenever $P$ is a height one prime of $R$.
\end{defn}

\begin{rmk*}
It seems that in the literature, the term \Rone\ is only used for Noetherian rings (cf. \cite[p.\! 183]{Mats}).  Here we have extended it to arbitrary commutative rings in a way that both coincides with the established definition in the Noetherian case and suits our purpose in the general case.
\end{rmk*}

\begin{prop}\label{pr:R1}
Any perinormal domain $R$ satisfies \Ronei.
\end{prop}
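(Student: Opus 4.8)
The plan is to localize and reduce to the one-dimensional local case, where going-down becomes essentially vacuous. Let $P$ be a height-one prime of $R$. By Proposition~\ref{pr:perilocal}, $R_P$ is again perinormal, and it is local with maximal ideal $PR_P$ of height one, so $\dim R_P = 1$ and $\Spec R_P = \{(0), PR_P\}$. Since $(R_1)$ is exactly the statement that each such $R_P$ is a valuation domain, it suffices to prove that this $R_P$ is a valuation domain.

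The first key observation I would record is that \emph{every} overring $S$ of $R_P$ satisfies going-down over $R_P$. Indeed, the only nontrivial specialization in $\Spec R_P$ is $(0) \subset PR_P$, so the only instance of the going-down condition to check is the following: given $Q \in \Spec S$ with $Q \cap R_P = PR_P$, we must find a prime of $S$ below $Q$ contracting to $(0)$. But $S$ is a domain, so its zero ideal $(0)$ is prime, it is contained in $Q$ (strictly, since $Q \cap R_P = PR_P \neq (0)$ forces $Q \neq (0)$), and it contracts to $(0)$ in $R_P$. Thus going-down holds for free. Applying Proposition~\ref{pr:periflat} to the perinormal domain $R_P$, I conclude that every overring of $R_P$ is flat over $R_P$.

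It then remains to pass from ``all overrings flat'' to ``valuation domain.'' The quickest route is to invoke the classical fact that a domain all of whose overrings are flat is a Pr\"ufer domain, whence the local domain $R_P$ is a valuation domain and we are done. I expect the main subtlety to be exactly this last step, together with the point that perinormality as defined only constrains \emph{local} overrings, whereas the natural test rings $R_P[x]$ need not be local; the flatness reformulation of Proposition~\ref{pr:periflat} is what cleanly sidesteps this. If a self-contained finish is preferred, one can instead argue directly from flatness via Theorem~\ref{thm:Rich}: each flat local overring $S$ satisfies $S = S_{\mathfrak n} = (R_P)_{\mathfrak n \cap R_P} \in \{R_P, K\}$ where $K = \Frac(R)$, a localization-of-modules argument upgrades this to $R_P[x] = K$ for every $x \in K \setminus R_P$, and a short integral-dependence and Krull-dimension computation shows $R_P$ is integrally closed; combining these yields that $x \notin R_P$ implies $x^{-1} \in R_P$, which is the valuation property. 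Either way, the genuinely load-bearing step is the elementary observation that one-dimensionality forces every overring to satisfy going-down.
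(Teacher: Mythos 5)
Your proof is correct, but it takes a genuinely different route from the paper's. The paper neither localizes nor mentions flatness: it picks a valuation overring $(V,\mathfrak{m})$ of $R$ with $\mathfrak{m}\cap R=\mathfrak{p}$ (which exists by the standard dominating-valuation-ring construction), observes that $R\subseteq V$ satisfies going-down trivially because every prime of $V$ contracts into the height-one prime $\mathfrak{p}$, and then applies perinormality directly to conclude $V=R_{\mathfrak{p}}$, so that $R_{\mathfrak{p}}$ is a valuation domain. You instead localize first via Proposition~\ref{pr:perilocal}, note that in the one-dimensional local ring $R_P$ \emph{every} overring satisfies going-down, convert this to ``every overring is flat'' via Proposition~\ref{pr:periflat}, and finish by citing the classical fact (due to Richman, from the same paper cited for Theorem~\ref{thm:Rich}) that a domain all of whose overrings are flat is Pr\"ufer. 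Both arguments are sound, and they lean on the same load-bearing triviality --- that going-down is vacuous when all contracted primes lie under a height-one prime. The paper's version is shorter and exhibits $R_{\mathfrak{p}}$ as a valuation ring directly; yours avoids choosing an auxiliary valuation overring and derives the valuation property intrinsically, at the cost of importing the flat-overring characterization of Pr\"ufer domains, which the paper never states --- though your sketched elementary finish (every $x\in K\setminus R_P$ gives $R_P[x]=K$, whence $x^{-1}$ is integral over $R_P$, and $R_P$ is integrally closed for dimension reasons) does close that loop correctly if one prefers not to cite it.
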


\begin{proof}
Let $\p$ be a height one prime of $R$.  Let $(V,\m)$ be a valuation overring of $R$ such that $\m \cap R = \p$. (If $R$ is Noetherian, we can choose $V$ to be Noetherian as well.)  Then the map $R \rightarrow V$ trivially satisfies going-down.  Thus, $V$ is a localization of $R$, whence $V = R_{\m \cap R} = R_\p$, completing the proof that $R$ satisfies \Rone.
\end{proof}

\begin{prop}\label{pr:intbijection}
If $(R,\m)$ is a local perinormal domain, then for any integral overring $S$ of $R$ such that $\Spec S \rightarrow \Spec R$ is a bijection, we have $R=S$.
\end{prop}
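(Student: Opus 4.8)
The plan is to verify that $S$ is a local overring of $R$ satisfying going-down, and then to invoke perinormality together with a contraction argument to force $S=R$. First I would check that $S$ is local. Since $S$ is integral over $R$ and $R$ has the single maximal ideal $\m$, the lying-over theorem and the fact that contraction carries maximal ideals of $S$ to maximal ideals of $R$ (and conversely, over an integral extension, primes lying over a maximal ideal are maximal) show that the maximal ideals of $S$ are precisely the primes of $S$ lying over $\m$. The hypothesis that $\Spec S \to \Spec R$ is a bijection then forces exactly one such prime $\n$, so $(S,\n)$ is local with $\n \cap R = \m$.

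The heart of the argument is to show that the inclusion $R \subseteq S$ satisfies going-down. By Lemma~\ref{lem:surj} it suffices to prove, for each $P \in \Spec S$ with contraction $\p := P \cap R$, that $\Spec S_P \to \Spec R_\p$ is surjective; concretely, every prime $\p' \subseteq \p$ of $R$ must be the contraction of some prime of $S$ contained in $P$. Because the map is a bijection there is a unique $P' \in \Spec S$ lying over $\p'$, so the whole question reduces to showing that the bijection reflects inclusions: $P' \cap R \subseteq P \cap R$ should imply $P' \subseteq P$. I would obtain this from the classical going-up theorem for the integral extension $R \subseteq S$ (see, e.g., \cite{Mats}): given $\p' = P'\cap R \subseteq P \cap R = \p$, going-up produces a prime $P'' \supseteq P'$ with $P'' \cap R = \p$, and the injectivity of $\Spec S \to \Spec R$ identifies $P''$ with $P$, whence $P' \subseteq P$. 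This yields the required surjectivity, and hence going-down.

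With going-down in hand, perinormality of $R$ applies to the local overring $(S,\n)$ and gives $S = R_\p$ for some prime $\p$ of $R$. Finally I would pin down $\p$: the maximal ideal of $R_\p$ is $\p R_\p$, which contracts to $\p$, while the maximal ideal $\n$ of $S$ contracts to $\m$; comparing, $\p = \m$, so $S = R_\m = R$ since $R$ is already local. The main obstacle is the middle step — converting the bare bijection on spectra into an order-reflecting (hence going-down) statement. An alternative route there is topological: for an integral extension $\Spec S \to \Spec R$ is a closed map, so a continuous closed bijection is a homeomorphism and therefore an isomorphism of the specialization (inclusion) orders; I prefer the going-up argument above, since it uses only lying-over and injectivity and avoids invoking closedness.
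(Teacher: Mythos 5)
Your proposal is correct and follows essentially the same route as the paper: establish that $S$ is local via lying-over and incomparability, derive going-down from going-up plus injectivity of the Spec map, and then use perinormality together with the fact that the resulting localization must be at $\m$ to conclude $R=S$. The only cosmetic difference is that you phrase the going-down verification through Lemma~\ref{lem:surj} and conclude by matching contractions of maximal ideals, where the paper argues directly from the definition and notes that a local homomorphism of local rings that is a localization must be an equality.
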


\begin{proof}
Let $S$ be an integral overring of $R$ such that $\Spec S \rightarrow \Spec R$ is bijective.  By integrality of the extension, some prime ideal $\n$ of $S$ lies over $\m$; by bijectivity, there can be only one such prime; since fibers of Spec maps on integral extensions are antichains, $\n$ is maximal, and the unique maximal ideal of $S$.

Now, let $\p \subset \q$ be a chain of primes in $R$, and let $Q \in \Spec S$ with $Q \cap R = \q$.  Since the Spec map is surjective, there is some $P \in \Spec S$ with $P \cap R = \p$.  By the `going up' property of integral extensions, there is some $Q' \in \Spec S$ such that $P \subseteq Q'$ and $Q' \cap R = \q$.  But then by injectivity of the Spec map, $Q' = Q$.  This shows that the inclusion $R \subseteq S$ satisfies going-down; hence $S$ is a localization of $R$ since $R$ is perinormal.  But since the map $R \rightarrow S$ is a local homomorphism of local rings, the only way $S$ can be a localization of $R$ is if $R=S$.
\end{proof}

Recall that an integral domain $R$ is \emph{weakly normal}\footnote{This is \emph{not} the original definition \cite{AnBom-wn}, but it is equivalent \cite[Remark 1]{Yan-wnext}.} if for any integral overring $S$ of $R$ such that the map $\Spec S \rightarrow \Spec R$ is a bijection and for all $P \in \Spec S$ (where we set $\p := P \cap R$), the corresponding field extensions $R_\p / \p R_\p \rightarrow S_P / PS_P$ is purely inseparable, it follows that $R=S$.

A domain $R$ is \emph{seminormal} if \cite{Sw-semi} whenever $x$ is an element of the fraction field with $x^2, x^3 \in R$, we have $x\in R$.  However, it is equivalent to say that for any integral overring $S$ such that $\Spec S \rightarrow \Spec R$ is a bijection and the corresponding field extensions $R_\p / \p R_\p \rightarrow S_P / P S_P$ are isomorphisms, then $R=S$.  From this, it is clear that every weakly normal domain is seminormal, and that for a domain that contains $\Q$, the converse holds.

Recall that both weak normality and seminormality are local properties in the sense of Proposition~\ref{pr:perilocal}.  Also every normal domain is weakly normal.   For all this and more, cf.\! Vitulli's survey article on weak normality and seminormality \cite{Vit-survey}.

\begin{cor}\label{cor:pnwn}
If $R$ is perinormal, then it is weakly normal $($hence seminormal$)$.
\end{cor}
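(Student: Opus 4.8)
The plan is to deduce this from Proposition~\ref{pr:intbijection} via the local-global principles for perinormality and weak normality, so that almost no new work is required. The guiding observation is that Proposition~\ref{pr:intbijection} actually verifies a condition \emph{stronger} than weak normality in the local case: for a local perinormal domain it forces $R = S$ for \emph{every} integral overring $S$ inducing a bijection on spectra, with no restriction on the residue field extensions. The weak normality criterion recalled just above demands the same conclusion only under the extra hypothesis that each $R_\p/\p R_\p \to S_P/PS_P$ is purely inseparable, so that criterion is satisfied a fortiori.

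First I would pass to the local case. By Proposition~\ref{pr:perilocal}, perinormality of $R$ yields perinormality of $R_\m$ for every maximal ideal $\m$; and Proposition~\ref{pr:intbijection}, applied to the local domain $R_\m$, then shows (by the observation above) that each $R_\m$ is weakly normal. Since weak normality is a local property in the sense of Proposition~\ref{pr:perilocal}, as recalled above, I would conclude that $R$ itself is weakly normal. The parenthetical ``hence seminormal'' is then immediate, since every weakly normal domain is seminormal.

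There is no genuine obstacle here; the corollary is essentially a repackaging of Proposition~\ref{pr:intbijection}. The only point requiring care is the comparison between the conclusion of that proposition and the definition of weak normality: one must notice that Proposition~\ref{pr:intbijection} drops the purely-inseparable hypothesis on the residue field extensions, so that it delivers weak normality (indeed, something more) in the local setting, after which the local-global patching closes the argument.
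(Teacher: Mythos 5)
Your proposal is correct and follows essentially the same route as the paper: reduce to the local case using the locality of both perinormality and weak normality, then invoke Proposition~\ref{pr:intbijection}, whose conclusion holds for all integral overrings inducing a spectral bijection and hence a fortiori for those satisfying the purely-inseparable residue field condition in the definition of weak normality.
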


\begin{proof}
Since both perinormality and weak normality are local properties, we may assume $R$ is local.  Now let $S$ be an integral overring of $R$ where $\Spec S \rightarrow \Spec R$ is a bijection such that for any $P \in \Spec S$, the field extension $R_\p / \p R_\p \rightarrow S_P / P S_P$ is purely inseparable (where $\p =P\cap R$). Then by Proposition~\ref{pr:intbijection}, $R=S$. It follows %[[FROM CHARACTERIZATION OF WN]]
that $R$ is weakly normal.
\end{proof}

We next present two examples to show that the converse to Corollary~\ref{cor:pnwn} is false, even under some additional restrictions.

\begin{example}
Not all weakly normal (resp. seminormal) domains are perinormal, even in dimension 1.  For example, $A=\mathbb R[x,ix]$ is seminormal, even weakly normal, without being perinormal.  Failure of perinormality arises from the fact that $\mathbb C[x]$ is going-down over $A$ (with the same fraction field $\mathbb C(x)$) without being a localization of it.  To see seminormality, merely observe that $A$ consists of those polynomials whose constant term is real, and if $f \in \C[x]$ is such that its square and cube have real constant term, it follows that the constant term of $f$ has its square and cube in $\R$, whence the constant term of $f$ is in $\R$ already.
\end{example}

\begin{example} (Thanks to Karl Schwede for this example.)
Even for finitely generated algebras over algebraically closed fields, weakly normal \Rone\ domains are not necessarily perinormal.  For an example, consider $R=k[x,y,xz,yz, z^2]$ where $k$ is any field of characteristic not equal to 2.  Let $A=k[x,y,z]$, and note that $A$ is the integral closure of $R$.  Hence every prime ideal of $R$ is contracted from $A$.  Let $P \in \Spec A$.  If $P \nsupseteq (x,y)$, then $z\in R_{P \cap R}$, whence $R_{P \cap R} = A_P$ is regular.  Therefore, $R_{P\cap R}$ is normal, weakly normal, and perinormal.  This also shows that $R$ satisfies \Rone.  

Further, Yanagihara \cite[Proposition 1]{Yan-wnext} has shown that an arbitrary pullback of a weakly normal inclusion is also a weakly normal inclusion.  Hence, we may conclude that $R$ is weakly normal, as $R/I$ is a subring of $A/I$, where $I=(x,y,xz,yz)R = (x,y)A$, and the extension $k[z^2] \hookrightarrow k[z]$ is weakly normal, since $\chr k \neq 2$.  (One may similarly show the ring is seminormal even when $\chr k = 2$ by using \cite[4.3]{GrTr-semi} in place of \cite[Proposition 1]{Yan-wnext}, which shows the analogous fact for seminormal inclusions.)

However, the ring $R_{(x,y) \cap R} = k(z^2)[x,y,xz,yz]_{(x,y,xz,yz)}$ is not perinormal.  Its integral closure is $A_{(x,y)} = k(z)[x,y]_{(x,y)}$.  Then the map $R_{(x,y) \cap R} \rightarrow A_{(x,y)}$ induces a bijection on spectra because for all other primes, we have an isomorphism, whereas the localness of the integral closure shows that we also have bijectivity at the maximal ideal.  But the two rings are unequal because $z\notin R_{(x,y) \cap R}$.  Then since $R_{(x,y) \cap R}$ is not perinormal, neither is $R$.
\end{example}

\begin{lemma}\label{lem:R1over}
Let $R$ be an integral domain, $S$ an overring of $R$, and $\p \in \Spec S$ such that $V := R_{\p \cap R}$ is a valuation domain of dimension 1.  Then $R_{\p \cap R} = S_\p$ as subrings of the fraction field $K$ of $R$, and $\hgt \p = 1$.
\end{lemma}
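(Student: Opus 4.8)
Write $\q := \p \cap R$, so that by hypothesis $V = R_\q$ is a one-dimensional valuation domain. The plan is to first show $R_\q \subseteq S_\p$ as subrings of $K$, then identify $S_\p$ among the overrings of the valuation domain $V$, and finally use one-dimensionality to force $S_\p = V$ and to read off the height of $\p$.

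First I would establish the containment. If $r \in R \setminus \q$, then since $r \in R$ and $\q = \p \cap R$ we have $r \notin \p$; hence $R \setminus \q$ is a multiplicative subset of $S \setminus \p$. Localizing the inclusion $R \subseteq S$ at these sets then yields $R_\q \subseteq S_\p$, where both are regarded as subrings of $K$ (the common fraction field of $R$, $S$, $R_\q$, and $S_\p$). In particular $V = R_\q \subseteq S_\p \subseteq K$, so $S_\p$ is an overring of $V$.

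The main step is to invoke the structure of overrings of a valuation domain (the same fact already used in the valuation-domain example above): every ring between a valuation domain $V$ and its fraction field is a localization $V_P$ at some $P \in \Spec V$, and is itself a valuation domain. Since $\dim V = 1$ we have $\Spec V = \{(0), \m_V\}$, so the only overrings of $V$ are $V_{\m_V} = V$ and $V_{(0)} = K$. Therefore $S_\p$ is either $V$ or $K$. To exclude the second possibility, note that because $\dim V = 1$ and a field has dimension $0$, we must have $\q \neq (0)$; thus $\p \supseteq \q$ contains a nonzero element and $\p \neq (0)$. As $S$ is a domain, $S_\p = K$ would force $\p S_\p = 0$, i.e.\ $\p = (0)$, a contradiction. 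Hence $S_\p = V = R_\q = R_{\p \cap R}$, which is the first assertion. The height claim is then immediate, since $\hgt \p = \dim S_\p = \dim V = 1$.

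I do not expect a serious obstacle here: the argument is essentially an assembly of standard facts. The only point requiring care is recognizing that the dimension hypothesis on $V$ is exactly what is needed to rule out the degenerate case $S_\p = K$ (equivalently, to guarantee $\p \neq (0)$), after which the classification of overrings of a valuation domain finishes both claims at once.
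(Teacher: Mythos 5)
Your argument is correct and follows essentially the same route as the paper's proof: observe $V = R_{\p\cap R} \subseteq S_\p \subseteq K$, use the classification of overrings of a one-dimensional valuation domain to see $S_\p \in \{V, K\}$, rule out $K$ since $\p \neq 0$, and read off $\hgt \p = \dim V = 1$. The only difference is that you spell out the containment $R_\q \subseteq S_\p$ and the reason $\p \neq 0$ in slightly more detail than the paper does.
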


\begin{proof}
We have $V=R_{\p \cap R} \subseteq S_\p \subseteq K$.  But $S_\p \neq K$, since $\p\neq 0$.  On the other hand, $V$ is a valuation domain, so every overring is a localization at a prime ideal.  Since $V$ has only two primes, the only possibilities are $V$ and $K$.  Since $S_\p \neq K$, it follows that $S_\p = V$.  Finally, $\hgt \p = \dim S_\p = \dim V = 1$.
\end{proof}

\begin{defn}
For a commutative ring $R$, $\Spec^1(R)$ denotes the set of all \emph{height one} primes of $R$.
\end{defn}

\begin{prop}\label{pr:Spec1}
Let $R$ be an \Ronei\ domain and let $S$ be an overring such that the extension $R \subseteq S$ satisfies going-down.  Then $S$ satisfies \Ronei, and the map $\Spec S \rightarrow \Spec R$ induces an injective map $\Spec^1(S) \rightarrow \Spec^1(R)$ whose image consists of those height one primes $\p$ of $R$ such that $\p S \neq S$.
\end{prop}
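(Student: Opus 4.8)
The plan is to make Lemma~\ref{lem:R1over} the engine of the whole argument. First I would record the following consequence of the hypotheses: if $\q \in \Spec S$ contracts to a \emph{height one} prime $\p := \q \cap R$ of $R$, then since $R$ is \Ronei\ the ring $R_\p$ is a one-dimensional valuation domain, and Lemma~\ref{lem:R1over} immediately yields $S_\q = R_\p$ together with $\hgt \q = 1$. This single observation does most of the work; the rest is a matter of controlling which primes of $S$ contract to height one primes of $R$, and for that the going-down hypothesis (via Lemma~\ref{lem:surj}) is exactly what is needed.

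To show $S$ is \Ronei, I would take $\q \in \Spec^1(S)$ and set $\p := \q \cap R$. First I'd argue $\p \neq 0$: if $\p = 0$ then every nonzero element of $R$ is inverted in $S_\q$, forcing $K = \Frac(R) \subseteq S_\q \subseteq K$, so that $S_\q = K$ is a field, contradicting $\q \neq 0$. Next I'd bound $\hgt \p \leq 1$ by lifting a maximal chain of primes below $\p$ through $\q$ by means of going-down, so that $\hgt \q \geq \hgt \p$; since $\hgt \q = 1$ this gives $\hgt \p = 1$. The consequence recorded above then shows $S_\q = R_\p$ is a valuation domain, so $S$ is \Ronei. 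As a byproduct this shows that contraction carries $\Spec^1(S)$ into $\Spec^1(R)$, so the induced map is well-defined. For injectivity, if $\q_1, \q_2 \in \Spec^1(S)$ both contract to $\p$, then $S_{\q_1} = R_\p = S_{\q_2}$ as subrings of $K$; these equal local rings have a common maximal ideal $\m$, and $\q_i = \m \cap S$ for each $i$, forcing $\q_1 = \q_2$.

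Finally I would identify the image. One inclusion is immediate: if $\p = \q \cap R$ with $\q \in \Spec^1(S)$, then $\p S \subseteq \q \subsetneq S$, so $\p S \neq S$. For the reverse inclusion---which I expect to be the main obstacle, and the one place where going-down is indispensable---I'd start from a height one prime $\p$ of $R$ with $\p S \neq S$, choose a maximal ideal $\m$ of $S$ containing $\p S$, and note $\p \subseteq \m \cap R$. Going-down then produces a prime $\q \subseteq \m$ of $S$ with $\q \cap R = \p$ \emph{exactly} (the crucial point being that we obtain equality, not merely containment). Since $\p$ has height one, the recorded consequence gives $\hgt \q = 1$, so $\q \in \Spec^1(S)$ contracts to $\p$ and $\p$ lies in the image. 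The subtle points to watch are the ruling out of $\q \cap R = 0$ in the \Ronei\ step, and ensuring that the descended prime in the image step contracts precisely to $\p$ rather than to some strictly larger prime.
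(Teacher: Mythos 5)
Your proposal is correct and follows essentially the same route as the paper: both arguments hinge on Lemma~\ref{lem:R1over} to identify $S_\q$ with $R_{\q\cap R}$ and pin down heights, use going-down to lift chains (ruling out $\q\cap R=0$ by the same essential-extension/fraction-field observation, just phrased differently) and to produce a prime of $S$ over any $\p$ with $\p S\neq S$, and prove injectivity by noting that distinct primes yield distinct localizations. No gaps; the points you flag as subtle are exactly the ones the paper also addresses.
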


\begin{proof}
First we need to show that given a height one prime $Q$ of $S$, $\q:=Q \cap R$ is a height one prime of $R$.  We have $\q \neq 0$ because $R \subseteq S$ is an essential extension of $R$-modules; hence $\hgt \q \geq 1$.  On the other hand, suppose there is some $\p \in \Spec R$ with $0 \subsetneq \p \subsetneq \q$.  Then by going-down, there is some $P \in \Spec S$ with $P \cap R = \p$.  But then $P \neq 0$ (again by essentiality of the extension), whence $0 \subsetneq P \subsetneq Q$ is a chain of primes in $S$, so that $\hgt Q \geq 2$, a contradiction.  Then by Lemma~\ref{lem:R1over}, $S$ satisfies \Rone.

Next, let $\p \in \Spec^1(R)$.  If $\p S = S$, then no prime of $S$ can lie over $\p$.  On the other hand, if $\p S \neq S$, then there is some maximal ideal $Q$ of $S$ with $\p S \subseteq Q$.  Then the going-down property implies that there is some $P \in \Spec S$ with $P \cap R = \p$.  Moreover, Lemma~\ref{lem:R1over} along with the \Rone-ness of $R$ implies that $S_P = R_\p$ and $\hgt P = 1$.  Finally, if there is some other prime ideal $P'$ of $S$ with $P'\cap S= \p$, then we have $S_P = R_\p = S_{P'}$.  But different prime ideals of a ring always give rise to different localizations, so $P=P'$, finishing the proof that the map of $\Spec^1$'s is injective.
\end{proof}

Consider the following properties that an integral domain $R$ may have:
\begin{enumerate}
\item $R = \bigcap_{\p \in \Spec^1(R)} R_\p$.
\item For any nonzero element $r\in R$, the set $\{\p \in \Spec^1(R) \mid r\in \p\}$ is finite.
\item $R_\p$ is a DVR for all $\p \in \Spec^1(R)$.
\end{enumerate}
One says $R$ is a \emph{Krull domain} (resp. \emph{generalized Krull domain}) if it satisfies properties (1--3) (resp. properties (1), (2), and \Rone).

Recall the \emph{Mori-Nagata theorem} (cf. \cite[Theorem 4.3]{Fos-Cl}), which says that the integral closure of any Noetherian domain is Krull (though not necessarily Noetherian); hence every Noetherian normal domain is Krull.  

\begin{thm}\label{thm:Krperi}
If $R$ is a generalized Krull domain $($e.g. Noetherian normal$)$, then $R$ is perinormal.
\end{thm}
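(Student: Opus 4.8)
The plan is to combine the flatness reformulation of perinormality (Proposition~\ref{pr:periflat}) with Richman's criterion (Theorem~\ref{thm:Rich}). Thus I would let $S$ be an overring of $R$ satisfying going-down and aim to show $S$ is flat over $R$; by Theorem~\ref{thm:Rich} it suffices to fix a maximal ideal $Q$ of $S$, set $\q := Q \cap R$, and prove $S_Q = R_\q$. The inclusion $R_\q \subseteq S_Q$ is automatic (every element of $R \setminus \q$ is a unit in $S_Q$, since its contraction to $R$ cannot meet $\q$), so only $S_Q \subseteq R_\q$ needs work.

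For the reverse inclusion I would pass to the extension $R_\q \subseteq S_Q$, which again satisfies going-down: by Lemma~\ref{lem:surj}, going-down of $R \subseteq S$ is the surjectivity of $\Spec(S_P) \to \Spec(R_{P\cap R})$ for all $P$, and this condition is unchanged upon localizing at $\q$. Since $R$ satisfies \Ronei, so does $R_\q$, and I can apply Proposition~\ref{pr:Spec1} to $R_\q \subseteq S_Q$. Because the maximal ideal $QS_Q$ of $S_Q$ contracts to the maximal ideal $\q R_\q$ of $R_\q$, every height one prime of $R_\q$ lies inside $QS_Q \cap R_\q$ and hence does not expand to the unit ideal of $S_Q$; so the image of the injection $\Spec^1(S_Q) \hookrightarrow \Spec^1(R_\q)$ is all of $\Spec^1(R_\q)$, and $(S_Q)_{P'} = R_\p$ where $\p \subseteq \q$ is the height one prime of $R$ lying below $P'$. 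Intersecting over $\Spec^1(S_Q)$ and using the trivial inclusion $S_Q \subseteq \bigcap_{P'} (S_Q)_{P'}$ then yields
\[ S_Q \subseteq \bigcap_{P' \in \Spec^1(S_Q)} (S_Q)_{P'} = \bigcap_{\p \in \Spec^1(R),\ \p \subseteq \q} R_\p . \]
Consequently the theorem comes down to the single identity $R_\q = \bigcap_{\p \in \Spec^1(R),\, \p \subseteq \q} R_\p$; that is, to the assertion that defining property~(1) of a generalized Krull domain is inherited by the localization $R_\q$.

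This last identity is the main obstacle, and it is exactly where the finite-character hypothesis~(2) and \Ronei\ come into play (note that Proposition~\ref{pr:Spec1} needed only \Ronei, so property~(2) has not yet been used). The inclusion $\subseteq$ is clear. For $\supseteq$, given $x = a/b$ in the right-hand intersection, I would examine the ``pole set'' $T = \{\p \in \Spec^1(R) : v_\p(x) < 0\}$, where $v_\p$ denotes the valuation of the rank one valuation domain $R_\p$. Since each such $\p$ contains $b$, property~(2) makes $T$ finite, and membership of $x$ in the intersection forces $\p \not\subseteq \q$ for every $\p \in T$; choosing $t_\p \in \p \setminus \q$ then gives elements of $R \setminus \q$. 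I would clear the finitely many poles simultaneously with a product $w = \prod_{\p \in T} t_\p^{N_\p} \in R \setminus \q$, where the crucial point is that $R_\p$ has Krull dimension one, so its value group has rank one and is therefore \emph{archimedean}; this lets me pick each exponent $N_\p$ large enough that $v_\p(w) \geq -v_\p(x)$. Then $v_{\mathfrak r}(wx) \geq 0$ for every height one prime $\mathfrak r$ of $R$, so property~(1) of $R$ itself gives $wx \in R$, whence $x = (wx)/w \in R_\q$. Carrying out this archimedean simultaneous pole-clearing — and observing that it, rather than the discreteness of the valuations, is what the argument genuinely requires — is the heart of the matter; the remaining steps are bookkeeping with Propositions~\ref{pr:periflat} and \ref{pr:Spec1} and Lemma~\ref{lem:surj}.
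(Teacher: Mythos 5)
Your proof is correct, and its skeleton is the same as the paper's: both arguments sandwich the overring between $R_\q$ and the intersection $\bigcap R_\p$ over the height one primes $\p \subseteq \q$, using Proposition~\ref{pr:Spec1} and Lemma~\ref{lem:R1over} to match $\Spec^1$ of the overring with $\Spec^1(R_\q)$ and to identify the corresponding localizations. It differs in two places. The detour through Proposition~\ref{pr:periflat} and Theorem~\ref{thm:Rich} is harmless but unnecessary: the paper simply takes a local going-down overring $(S,\m)$, sets $Q=\m\cap R$, and runs the same sandwich. The substantive difference is your treatment of the final identity $R_\q=\bigcap_{\p\in\Spec^1(R),\ \p\subseteq\q}R_\p$: the paper dispatches it in one line by citing \cite[Corollary 43.6]{Gil-MIT}, which asserts that localizations of generalized Krull domains are again generalized Krull (so property (1) holds for $R_\q$), whereas you prove exactly the needed instance from scratch. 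Your pole-clearing argument is sound: the pole set is finite by property (2), each pole $\p$ satisfies $\p\nsubseteq\q$ by the membership hypothesis, and the rank one (hence archimedean) value groups supplied by \Rone\ let you choose the exponents $N_\p$ so that $wx\in R$ with $w\notin\q$. What this buys is a self-contained proof, plus the explicit observation that only archimedeanness, and not discreteness, of the valuations is used --- which is precisely why the theorem holds for generalized Krull domains and not merely Krull domains.
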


\begin{proof}
Let $(S, \m)$ be a local overring of $R$ such that the inclusion $R \subseteq S$ satisfies going-down.  Let $Q = \m \cap R$; $R_Q$ is then also a generalized Krull domain \cite[Corollary 43.6]{Gil-MIT}.  Note that the going-down condition implies that the map $\Spec S \rightarrow \Spec R_Q$ is surjective.  Hence by Proposition~\ref{pr:Spec1}, we get a \emph{bijective} map $\Spec^1(S) \rightarrow \Spec^1(R_Q)$, and for each $P \in \Spec^1(S)$ and corresponding $\p=P\cap R \in \Spec^1(R_Q)$, we have $(R_Q)_\p = S_P$ by Lemma~\ref{lem:R1over}.  Therefore \[
R_Q \subseteq S \subseteq \bigcap_{P \in \Spec^1(S)} S_P = \bigcap_{\p \in \Spec^1(R_Q)} (R_Q)_\p = R_Q.
\]
That is, $S=R_Q$, so $R$ is perinormal.
\end{proof}

\section{Local characterizations of perinormality}\label{sec:locNoeth}
In this section, after a preliminary exploration of how \Rone\ domains interact with overrings and the special relationship that occurs between two rings that share a nonzero ideal, we provide two surprising characterizations of perinormal domains within a large class of integral domains.

\begin{lemma}\label{lem:iclocal}
Let $R$ be an \Ronei\ integral domain whose integral closure $R'$ is a generalized Krull domain and such that for all $P\in \Spec^1 R'$, $P\cap R \in \Spec^1 R$.  If there is a maximal ideal of $R$ that contains all the height one primes of $R$, then $R$ is local.
\end{lemma}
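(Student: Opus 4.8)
The plan is to identify $\m$ (the maximal ideal containing every height one prime of $R$) as the \emph{unique} maximal ideal, by showing that a second maximal ideal would produce a contradiction. The engine of the whole argument is the intersection representation
\[
R' = \bigcap_{\q \in \Spec^1(R)} R_\q,
\]
which translates the generalized Krull structure of $R'$ into a statement about localizations of $R$ itself. Once this identity is in hand, I would choose an element lying in a competing maximal ideal but outside $\m$, observe that the containment of all height one primes in $\m$ forces this element to be a unit of $R'$, and then contradict lying over.

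First I would establish the displayed identity. Since $R'$ is a generalized Krull domain, property (1) gives $R' = \bigcap_{P \in \Spec^1(R')} R'_P$. For each $P \in \Spec^1(R')$ the hypothesis yields $P \cap R \in \Spec^1(R)$, so $R_{P \cap R}$ is a valuation domain of dimension $1$ because $R$ satisfies \Ronei; Lemma~\ref{lem:R1over}, applied with $S = R'$ and $\p = P$, then identifies $R'_P = R_{P \cap R}$ inside the common fraction field $K$. Conversely, every $\q \in \Spec^1(R)$ is a contraction of some $P \in \Spec^1(R')$: lying over produces a prime $P$ over $\q$, incomparability for the integral extension $R \subseteq R'$ forces $\hgt P \le \hgt \q = 1$, and $P \neq 0$ gives $\hgt P = 1$. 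Hence $\{P \cap R : P \in \Spec^1(R')\} = \Spec^1(R)$, so substituting $R'_P = R_{P \cap R}$ into the generalized Krull intersection collapses it to $\bigcap_{\q \in \Spec^1(R)} R_\q$, as claimed.

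With the identity available, suppose toward a contradiction that $\n \neq \m$ is a second maximal ideal. As both are maximal, $\n \not\subseteq \m$, so I may choose $a \in \n \setminus \m$. Since every height one prime of $R$ lies in $\m$ while $a \notin \m$, we get $a \notin \q$ for all $\q \in \Spec^1(R)$; thus $a$ is a unit in each valuation domain $R_\q$, and $a^{-1} \in \bigcap_{\q} R_\q = R'$. Therefore $a$ is a unit of $R'$. But $R'$ is integral over $R$, so lying over furnishes a prime $N$ of $R'$ with $N \cap R = \n$, whence $a \in \n \subseteq N$ --- impossible, since a unit cannot lie in a proper prime. This contradiction shows $\m$ is the only maximal ideal, so $R$ is local.

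The main obstacle is the intersection identity: one must check that the height one primes of $R'$ and of $R$ correspond tightly enough --- via Lemma~\ref{lem:R1over} together with the integral-extension height estimates --- that the generalized Krull formula for $R'$ can be rewritten purely in terms of localizations of $R$ at its height one primes. Once that bookkeeping is complete, the closing move, namely manufacturing a unit of $R'$ out of an element of a competing maximal ideal, is short.
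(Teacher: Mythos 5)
Your proposal is correct and follows essentially the same route as the paper: both arguments rest on the identity $R' = \bigcap_{\q \in \Spec^1(R)} R_\q$, obtained from the generalized Krull property of $R'$ together with Lemma~\ref{lem:R1over} and the hypothesis that height one primes of $R'$ contract to height one primes of $R$. The only (cosmetic) difference is the endgame: the paper shows $R_\m \subseteq R'$ and concludes from integrality of $R_\m$ over $R$ that $R = R_\m$, whereas you run the same computation in contrapositive form by extracting a unit of $R'$ from an element of a putative second maximal ideal and contradicting lying over.
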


\begin{proof}
Let $\m$ be a maximal ideal of $R$, and suppose that $\m$ contains all height one primes of $R$.  By Lemma ~\ref{lem:R1over}, since $R$ is an \Ronei\ domain, if $\p \in \Spec^1 R$ and $P\in \Spec^1 R'$ lies over $\p$, then $R_\p=R'_P$.  As height one primes of $R'$ contract to height one primes of $R$,  we have
 \[R_\m \subseteq \bigcap_{\p \in \Spec^1(R_\m)} (R_\m)_\p = \bigcap_{P \subseteq \m, \hgt P =1} R_P = \bigcap_{P \in \Spec^1R} R_P = R',
\]
 where the last equality follows since $R'$ is generalized Krull.  We have shown that $R_\m$ is integral over $R$, which can only happen if $R=R_\m$.
\end{proof}

\begin{lemma}\label{lem:lochomo}
Let $(R,\m)$ be an \Ronei\ local domain whose integral closure $R'$ is a generalized Krull domain and such that for all $P\in \Spec^1 R'$, $P\cap R \in \Spec^1 R$.  Let $S$ be an integral overring of $R$ that satisfies going-down over $R$.  Then $S$ is local.
\end{lemma}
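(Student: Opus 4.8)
The plan is to verify that $S$ satisfies the four hypotheses of Lemma~\ref{lem:iclocal}, applied with $S$ playing the role of the ground ring ``$R$''; localness of $S$ then follows at once. Three of these hypotheses are comparatively routine, and the fourth --- exhibiting a single maximal ideal of $S$ that contains all the height one primes of $S$ --- is where the going-down hypothesis does the essential work.

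For the routine inputs: since $S$ is integral over $R$ and lies in the fraction field $K$, every element of $S$ is integral over $R$, so $R \subseteq S \subseteq R'$. As $R'$ is integral over $S$ and, being generalized Krull, is integrally closed, $R'$ is precisely the integral closure of $S$; in particular the integral closure of $S$ is again generalized Krull. Proposition~\ref{pr:Spec1} applied to $R \subseteq S$ shows that $S$ is \Ronei. For the condition that height one primes of $R'$ contract to height one primes of $S$: given $P \in \Spec^1 R'$, set $\q := P \cap S$. Then $\q \neq 0$ and $\q \cap R = P \cap R \in \Spec^1 R$ by hypothesis, while incomparability for the integral extension $R \subseteq S$ forbids any strictly smaller nonzero prime of $S$ inside $\q$ (such a prime would contract into $\p := P \cap R$, contradicting $\hgt \p = 1$); hence $\hgt \q = 1$.

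The main step is to produce a maximal ideal of $S$ containing every height one prime, and I would in fact prove the stronger statement that \emph{every} height one prime of $S$ lies in \emph{every} maximal ideal of $S$. Fix a maximal ideal $\n$ of $S$; since $S$ is integral over the local ring $R$, we have $\n \cap R = \m$. Let $\q \in \Spec^1 S$ and put $\p := \q \cap R \in \Spec^1 R$, so that $\q$ is the unique height one prime of $S$ lying over $\p$ (Proposition~\ref{pr:Spec1}) and $\p \subseteq \m$. Applying going-down to the chain $\p \subseteq \m$ together with $\n$ lying over $\m$ yields a prime $P \subseteq \n$ with $P \cap R = \p$. Now choose a prime $\tilde P \subseteq P$ minimal among primes of $S$ lying over $\p$; minimality together with $\hgt \p = 1$ (and incomparability) forces $\hgt \tilde P = 1$, whence $\tilde P = \q$. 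Therefore $\q \subseteq P \subseteq \n$, as desired.

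With all four hypotheses verified, Lemma~\ref{lem:iclocal} applied to $S$ gives that $S$ is local, completing the proof. I expect the third paragraph to be the only delicate point: localness of $S$ genuinely depends on going-down, since $R'$ itself may fail to be local, so without the going-down assumption an integral overring (even $R'$) could have several maximal ideals. It is exactly the going-down property that pins all the height one primes of $S$ beneath a common maximal ideal and thereby makes Lemma~\ref{lem:iclocal} applicable.
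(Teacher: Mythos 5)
Your proof is correct and follows essentially the same route as the paper's: both reduce to Lemma~\ref{lem:iclocal} by checking that $S$ is \Ronei\ with generalized Krull integral closure $R'$ whose height one primes contract to height one primes of $S$, and that some maximal ideal of $S$ contains all of $\Spec^1 S$. The only cosmetic difference is in the last step, where the paper obtains $\q \subseteq \n$ by composing the two bijections $\Spec^1(S)\cong\Spec^1(R)\cong\Spec^1(S_\n)$ coming from Proposition~\ref{pr:Spec1}, whereas you run the underlying going-down and incomparability argument directly (and you also spell out the contraction claim for $\Spec^1 R'\to\Spec^1 S$ that the paper merely asserts).
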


\begin{proof}
By Proposition~\ref{pr:Spec1}, $S$ satisfies \Rone\ and the map $\Spec S \rightarrow \Spec R$ induces a bijection $\Spec^1(S) \arrow{\sim} \Spec^1(R)$.  Now let $\n$ be a maximal ideal of $S$ that contains $\m S$; it exists because $S$ is integral over $R$.  Then the extension $R \subseteq S_\n$ is going-down and $\m S_\n \neq S_\n$, so Proposition~\ref{pr:Spec1} applies again to produce a bijection $\Spec^1(S_\n) \arrow{\sim} \Spec^1(R)$.  This bijection composes with the inverse of the previous bijection to give a bijection of $\Spec^1(S_\n)$ with $\Spec^1(S)$.  Hence, for all height one primes $\p$ of $S$, we have $\p S_\n \neq S_\n$ -- that is, $\p \subseteq \n$.  Moreover, $R$ and $S$ have the same integral closure, which is generalized Krull by hypothesis.  As height one primes of $R'$ contract to height one primes of $R$, one can show using the properties of integrality that the same holds for all intermediate rings.  Thus by Lemma~\ref{lem:iclocal}, $S$ must be local.
\end{proof}

Next we give conditions on a domain $A$  that ensure that height one primes of $A'$ contract to height one primes of $A$.  We first need to recall some definitions. A ring $A$ is called {\it catenary} if given a pair $\p_1 \subset\p_2 $ of prime ideals of $ A$ such that there exists a saturated chain of prime ideals between the two, then all such saturated chains  have the same length.  We say that $A$ is {\it universally catenary} if it is Noetherian and every finitely generated $A$-algebra is catenary.  It is clear that being catenary and hence being universally catenary is closed under localization.

 Let $A \subseteq B$ be integral domains.  Then tr.deg$_AB$ denotes the transcendence degree of the fraction field of $B$ over that of $A$.  Recall that the ring extension is said to satisfy  the dimension  (or altitude) formula if the following equality holds for all $P\in \Spec B$:
$$ \mbox{ht} P + \mbox{tr.deg}_{A/\p} B/P = \mbox{ht } \p + \mbox{tr.deg}_A B$$
 where $\p = P\cap A$ (see for example \cite[p. 119]{Mats}).  We note that if in addition $B$ is integral over $A$, then tr.deg$_A B =0=$ tr.deg$_{A/\p} B/P$ in which case the height of a prime of $B$ is invariant under contraction to $A$.

\begin{lemma} \label{lem:save}
Let $R$ be a  universally catenary integral domain with integral closure $R'$. Then every height one prime ideal of $R'$ contracts to a height one prime ideal of $R$.
\end{lemma}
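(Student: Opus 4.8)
The plan is to prove the slightly stronger statement that $\hgt P = \hgt(P\cap R)$ for \emph{every} $P \in \Spec R'$, from which the lemma follows by specializing to $\hgt P = 1$. Since $R \subseteq R'$ is an integral extension of domains, the inequality $\hgt P \le \hgt(P \cap R)$ is automatic: a saturated chain below $P$ contracts, by incomparability, to a strictly increasing chain of the same length below $P \cap R$. The entire content of the lemma is therefore the reverse inequality $\hgt(P \cap R) \le \hgt P$, which is precisely a going-down statement for the extension $R \subseteq R'$ and is where universal catenarity must enter; without it, Nagata's classical non-catenary local domains show that a height one prime of the integral closure can contract to a prime of height $\ge 2$.

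First I would dispose of the easy bookkeeping. Writing $\p := P \cap R$, if $P \neq 0$ then $\p \neq 0$: a nonzero $y \in P$ satisfies an integral equation over $R$ whose constant term $c_0$ is nonzero by minimality of the relation, and $c_0 = -y(y^{n-1} + \cdots + c_1) \in \p$. Thus $\hgt \p \ge 1$ whenever $\hgt P = 1$, and it suffices to bound $\hgt \p$ above by $1$. Granting going-down for $R \subseteq R'$, this is immediate: were $\hgt \p \ge 2$, I could choose primes $0 \subsetneq \p_1 \subsetneq \p$ of $R$, lift $\p_1$ by going-down to a prime $P_1 \subsetneq P$ of $R'$ with $P_1 \cap R = \p_1$, and note $P_1 \neq 0$ (again because $\p_1 \neq 0$), producing a chain $0 \subsetneq P_1 \subsetneq P$ and contradicting $\hgt P = 1$.

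The crux is therefore to establish going-down for $R \subseteq R'$, and this is the step I expect to be the main obstacle, since it cannot be extracted from the dimension formula applied to the module-finite subalgebras of $R'$ alone. Indeed, writing $R'$ as the directed union of its module-finite $R$-subalgebras $T_\alpha$, the dimension formula (available because each $T_\alpha$ is a finitely generated domain over the universally catenary ring $R$, with all transcendence-degree terms vanishing by integrality) yields only $\hgt(P \cap T_\alpha) = \hgt \p$ for every $\alpha$; combined with integrality this merely reproduces $\hgt P \le \hgt \p$ and gives nothing in the needed direction. To obtain the reverse inequality I would reduce to the local case, replacing $R$ by $R_\p$ and $R'$ by its integral closure $(R_\p)' = (R')_\p$, which preserves both universal catenarity and the hypothesis $\hgt P = 1$, and then invoke the theorem of Ratliff that a Noetherian local domain is universally catenary precisely when it is (formally) quasi-unmixed, together with the fact that going-down holds between a locally quasi-unmixed domain and its integral closure (equivalently, that every maximal ideal of $R'$ lying over $\m$ has height $\dim R$). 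Feeding this going-down property into the chain argument of the previous paragraph completes the proof.
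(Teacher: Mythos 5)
Your reduction of the lemma to the inequality $\hgt(P\cap R)\le \hgt P$ is fine, as are the easy steps (nonzero primes of $R'$ contract to nonzero primes of $R$; the chain argument \emph{given} going-down). The problem is the step you yourself identify as the crux: the asserted ``fact'' that going-down holds between a locally quasi-unmixed (equivalently, universally catenary) Noetherian domain and its integral closure is false, and the two statements in your parenthesis are not equivalent. A counterexample is furnished by the gluing construction of Section~\ref{sec:gluing} of this very paper: let $A=k+(\n_1\cap\n_2)\subseteq k[x,y]$, where $\n_1=(x,y)$ and $\n_2=(x-1,y-1)$, and let $R=A_J$ with $J=\n_1\cap\n_2$. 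This is an excellent, hence quasi-unmixed, two-dimensional local domain with $R'=k[x,y]_{A\setminus J}$. Put $\p_1=(x)\cap A$, a nonzero prime properly contained in the maximal ideal $J$, and let $N_2$ be the maximal ideal of $R'$ coming from $\n_2$. Every nonzero nonmaximal prime of $R'$ avoids the conductor, so by Corollary~\ref{cor:commonideal} it is the \emph{unique} prime of $R'$ over its contraction; hence the only prime of $R'$ lying over $\p_1$ is $(x)$, which is not contained in $N_2$. Going-down therefore fails for the pair $\p_1\subset J$, $N_2$. So there is no general going-down theorem to feed into your chain argument, and the particular instance you would need (a height one $P$ over a $\p$ of height $\ge 2$) is exactly as strong as the lemma itself.

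What \emph{is} true, and what your parenthetical remark gestures at, is Ratliff's theorem that a Noetherian local domain is quasi-unmixed if and only if it satisfies the second chain condition, so that every maximal chain of primes in any integral extension domain of $R_\p$ --- in particular in $(R_\p)'$ --- has length $\dim R_\p$. Granting that, the lemma follows by a dimension count rather than by lifting chains: after localizing at $\p=P\cap R$, the prime $P$ becomes a height one \emph{maximal} ideal of $(R_\p)'$, so levelness forces $\dim R_\p=1$. Rewritten this way (with a citation to Ratliff's work on quasi-unmixedness and the chain condition), your proof closes, and it is then genuinely different from the paper's: the paper instead invokes \cite[Corollary 2.5]{Rat-3int} to reduce the question to the module-finite extensions $R[x]\subseteq R[x,f]$ with $f\in R[x]'$, where the dimension formula applies directly. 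That is precisely the route you dismissed as yielding ``nothing in the needed direction'' --- a fair complaint about the naive directed-union argument, but not about Ratliff's refinement of it.
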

\begin{proof}    Since $R$ is Noetherian, by \cite[Corollary 2.5]{Rat-3int} it will suffice to show  that if $f\in R[x]'$, then the height of a prime ideal of $R[x,f]$ is invariant under contraction to $R[x]$.  Since $R[x]$ is also universally catenary and $R[x,f]$ is module finite over $R[x]$ (in particular algbera finite), it follows that the extension $R[x] \subseteq R[x,f]$ satisfies the dimension formula (see for example \cite[Theorem 15.6]{Mats}).   Since it is also an integral extension, we have that the height is invariant under contraction as desired.
\end{proof}

\begin{rmk*} The universal catenarity assumption is not particularly restrictive, as almost every Noetherian ring that arises in algebraic geometry, number theory, and everyday commutative algebra is universally catenary.  Indeed, the class of universally catenary rings is closed under localization and finitely generated algebra extensions.  Moreover, it includes %: \begin{itemize}
%\item
 Cohen-Macaulay rings (including Dedekind domains, fields, and regular local rings; see \cite[Theorem 17.9]{Mats}) and 
%\item
complete Noetherian local rings (e.g. power series rings in finitely many variables over a field or over the $p$-adics; see \cite[Theorem 29.4]{Mats}).
%\end{itemize}
\end{rmk*}

The following three results are well-known to experts, and some of their statements appear (without proofs) in \cite{Fo-topdef}.  However, we include them here for completeness and to make the paper self-contained.

\begin{lemma}\label{lem:locms}
Let $R \subseteq T$ be an inclusion of commutative rings, and let $I$ be an ideal that is common to $R$ and $T$. $($That is, $I$ is an ideal of $R$ and $IT=I$.$)$  Let $W$ be a multiplicatively closed subset of $T$, set $V:=W \cap R$, and suppose that $I \cap W\neq \emptyset$.  Then the natural map $R_V \rightarrow T_W$ is an isomorphism.
\end{lemma}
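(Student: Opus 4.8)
The plan is to exploit a single element $s \in I \cap W$, whose existence is guaranteed by hypothesis. The crucial observation is that, because $I$ is an ideal of $T$ contained in $R$, multiplication by $s$ carries all of $T$ into $R$: for any $t \in T$ we have $st \in I \subseteq R$. This is the mechanism that lets me pull back fractions having numerators in $T$ to fractions having numerators in $R$. Before anything else I would verify that the map in question is even defined: since $V = W \cap R \subseteq W$, the composite $R \hookrightarrow T \to T_W$ sends every element of $V$ to a unit, so by the universal property of localization it factors uniquely through a ring homomorphism $\phi \colon R_V \to T_W$ carrying $r/v$ to $r/v$. It is this $\phi$ I will show is bijective.

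For injectivity, I would suppose $\phi(r/v) = 0$, so that there is some $w \in W$ with $wr = 0$ in $T$. Then $sw \in W$ (as $W$ is multiplicatively closed) while $sw \in I \subseteq R$, so $sw \in W \cap R = V$; moreover $(sw)r = s(wr) = 0$, an equation that holds already in $R$ since $R \subseteq T$. Hence $r/v = 0$ in $R_V$. For surjectivity, I would take an arbitrary $t/w \in T_W$; by the crucial observation $st \in R$, and $sw \in I \cap W$ gives $sw \in V$, so the element $st/(sw)$ genuinely lies in $R_V$. Its image is $\phi(st/(sw)) = st/(sw) = t/w$ in $T_W$, the last equality holding because the common factor $s$ cancels ($stw - swt = 0$). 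Thus $\phi$ is onto.

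I do not expect a serious obstacle here; the only point requiring care is surjectivity, where one must clear the denominator by multiplying \emph{both} numerator and denominator by $s$ simultaneously, precisely so that the new numerator $st$ lands in $R$. This is exactly the step where the hypothesis that $I$ is a common ideal (an ideal of $T$ as well as of $R$), rather than merely an ideal of $R$, gets used in an essential way.
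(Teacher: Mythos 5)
Your proposal is correct and follows essentially the same argument as the paper's proof: fix $z \in I\cap W$ (your $s$), use $zw \in I\cap W \subseteq R\cap W = V$ together with $(zw)r=0$ for injectivity, and $zt \in I \subseteq R$, $zw\in V$ to write $t/w = zt/zw$ for surjectivity. The only difference is your explicit check that the map is well-defined, which the paper takes for granted.
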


\begin{proof}
Let $z\in I \cap W$.  	To see injectivity, let $\frac{r}{v} \in R_V$ (with $r\in R$, $v\in V$) such that $\frac{r}{v}=0$ in $T_W$.  Then for some $w\in W$, we have $wr=0$.  Moreover, $zw \in I \cap W \subseteq R \cap W = V$ and $(zw)r=0$, whence $\frac{r}{v}=0$ in $R_V$.

To see surjectivity, let $\frac{t}{w} \in T_W$ (with $t\in T$, $w\in W$).  Then $zt \in IT \subseteq R$ and $zw \in I \cap W \subseteq R \cap W = V$, so that $\frac{t}{w} = \frac{zt}{zw} \in R_V$.
\end{proof}

\begin{cor}\label{cor:commonideal}
Let $R \subseteq T$ be an inclusion of commutative rings, and let $I$ be an ideal common to $R$ and $T$.  Let $z\in I$, and let $P \in \Spec T$ with $I \nsubseteq P$.  Then the natural maps $R_z \rightarrow T_z$ and $R_{P \cap R} \rightarrow T_P$ are isomorphisms.
\end{cor}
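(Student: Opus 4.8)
The plan is to deduce both isomorphisms as immediate instances of Lemma~\ref{lem:locms}, obtained by two different choices of the multiplicatively closed set $W \subseteq T$. Since the lemma already does all the arithmetic of comparing the localizations $R_V$ and $T_W$ under the common-ideal hypothesis, the only work left is to pick $W$ correctly in each case and to check that the standing hypotheses of the lemma hold.

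First I would handle the map $R_z \rightarrow T_z$. Here I take $W := \{z^n : n \geq 0\}$, the multiplicative set generated by $z$ inside $T$. Because $z \in I \subseteq R$, every power of $z$ lies in $R$, so $W \subseteq R$ and hence $V := W \cap R = W$. Moreover $z \in I \cap W$, so $I \cap W \neq \emptyset$. By construction $R_V = R_z$ and $T_W = T_z$, so Lemma~\ref{lem:locms} yields that $R_z \rightarrow T_z$ is an isomorphism.

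Next I would handle the map $R_{P \cap R} \rightarrow T_P$, this time taking $W := T \setminus P$. Since $R \subseteq T$, we get $V = W \cap R = (T \setminus P) \cap R = R \setminus (P \cap R)$, so that $R_V = R_{P \cap R}$ and $T_W = T_P$. The hypothesis $I \nsubseteq P$ says precisely that some element of $I$ avoids $P$, i.e. $I \cap (T \setminus P) = I \cap W \neq \emptyset$. Thus Lemma~\ref{lem:locms} again applies and gives that $R_{P \cap R} \rightarrow T_P$ is an isomorphism.

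I do not expect any genuine obstacle here, as the corollary is a formal double application of the lemma; the content is entirely in Lemma~\ref{lem:locms} itself. The only points requiring a moment's care are the two bookkeeping checks $I \cap W \neq \emptyset$ and the identification of $V = W \cap R$ with the intended multiplicative set in each case, the latter relying on the containment $R \subseteq T$ to rewrite $(T \setminus P) \cap R$ as $R \setminus (P \cap R)$.
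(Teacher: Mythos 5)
Your proof is correct and follows exactly the paper's own argument: both isomorphisms are obtained by applying Lemma~\ref{lem:locms} with $W=\{z^n \mid n\in\N\}$ and $W=T\setminus P$ respectively. The bookkeeping checks you spell out (that $V=W$ in the first case and $V=R\setminus(P\cap R)$ in the second, and that $I\cap W\neq\emptyset$ in each case) are exactly the points the paper leaves implicit.
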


\begin{proof}
In the first case, apply Lemma~\ref{lem:locms} with $V=W=\{z^n \mid n\in \N\}$.  In the second case,  apply the same lemma with $W=T \setminus P$.
\end{proof}

\begin{cor}\label{cor:samefrac}
Let $R \subseteq T$ be integral domains that share a common nonzero ideal $I$.  Then the induced map of fraction fields is an isomorphism.
\end{cor}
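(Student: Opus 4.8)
The plan is to reduce the claim to the elementary fact that inverting a single nonzero element of a domain leaves its fraction field unchanged, and then to read off the result from Corollary~\ref{cor:commonideal}. Since $R \subseteq T$, the inclusion already induces an embedding $\Frac(R) \hookrightarrow \Frac(T)$, so the only thing to prove is surjectivity, i.e. that $T \subseteq \Frac(R)$.

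First I would pick a nonzero element $z \in I$, which exists because $I \neq 0$. Applying Corollary~\ref{cor:commonideal} to this $z$ (with the same common ideal $I$) shows that the natural map $R_z \rightarrow T_z$ is an isomorphism. Because $R$ and $T$ are integral domains and $z \neq 0$, the ring $R_z$ is an overring of $R$ sharing the fraction field of $R$, and likewise $T_z$ shares the fraction field of $T$. Passing to fraction fields in the isomorphism $R_z \cong T_z$ then yields $\Frac(R) \cong \Frac(T)$, as desired. Alternatively, one can bypass the corollary and argue directly: for any $t \in T$ the product $zt$ lies in $IT = I \subseteq R$, so $t = (zt)/z$ exhibits $t$ as an element of $\Frac(R)$; hence $T \subseteq \Frac(R)$ and the two fraction fields coincide.

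I do not anticipate any real obstacle here, as the statement is essentially a one-line consequence of the existence of a common nonzero ideal: such an ideal furnishes a single denominator $z$ that simultaneously clears every element of $T$ into $R$. The only point that deserves a word of care is the routine observation that localizing a domain at one nonzero element does not enlarge or change its fraction field, which is what lets me transfer the isomorphism $R_z \cong T_z$ to an isomorphism of fraction fields.
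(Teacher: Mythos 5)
Your proof is correct and rests on the same mechanism as the paper's: a single nonzero element $z$ of the common ideal serves as a universal denominator. The paper gets there slightly more directly by applying Lemma~\ref{lem:locms} with $W = T \setminus \{0\}$, so that $R_V$ and $T_W$ are already the two fraction fields, whereas you route through Corollary~\ref{cor:commonideal} with $W=\{z^n \mid n \in \N\}$ and then pass to fraction fields---a harmless extra step; your alternative direct argument ($zt \in I \subseteq R$, so $t = (zt)/z$) is precisely the surjectivity half of the proof of Lemma~\ref{lem:locms} specialized to this situation.
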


\begin{proof}
Apply Lemma~\ref{lem:locms} with $W = T \setminus \{0\}$.
\end{proof}

\begin{thm}\label{thm:equiv}
Let $R$ be a universally catenary integral domain with fraction field $K$. The following are equivalent.  \begin{enumerate}[label=\emph{(\alph*)}]
\item $R$ is perinormal.
\item For each $\p \in \Spec R$, $R_\p$ is the only ring $S$ between $R_\p$ and $K$ such that the induced map $\Spec S \rightarrow \Spec R_\p$ is an order-reflecting bijection.
\item $R$ satisfies \Ronei, and for each $\p \in \Spec R$, $R_\p$ is the only ring $S$ between $R_\p$ and its integral closure such that the induced map $\Spec S \rightarrow \Spec R_\p$ is an order-reflecting bijection.
\end{enumerate}
\end{thm}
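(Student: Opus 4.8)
The plan is to establish the cycle (a) $\Rightarrow$ (b) $\Rightarrow$ (c) $\Rightarrow$ (a); the first two implications are soft, and the real content is (c) $\Rightarrow$ (a), where universal catenarity finally enters through Lemmas~\ref{lem:save} and~\ref{lem:lochomo}. The one observation I will use repeatedly is that an order-reflecting bijection $\Spec S \to \Spec R_\p$ is automatically an order isomorphism, since Spec maps are always order-preserving. Consequently such an $S$ is local (its spectrum has a unique maximal element, namely the image of $\p R_\p$), and the extension $R_\p \subseteq S$ satisfies going-down: given $\p_1 \subseteq \p_2 = Q \cap R_\p$, the unique prime of $S$ over $\p_1$ is forced below $Q$ by order-reflection.

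For (a) $\Rightarrow$ (b), fix $\p$; by Proposition~\ref{pr:perilocal} the localization $R_\p$ is perinormal, so a ring $S$ as in (b) is a local going-down overring and hence a localization of $R_\p$; but a localization whose spectrum surjects onto $\Spec R_\p$ must equal $R_\p$. For (b) $\Rightarrow$ (c), the integral-overring statement of (c) is immediate because every ring between $R_\p$ and its integral closure lies between $R_\p$ and $K$; it remains to extract \Rone. Were some height one $R_\p$ not a valuation domain, it would be a one-dimensional local Noetherian non-normal ring, hence strictly contained in a DVR $V$ dominating it (localize its normalization at a maximal ideal lying over $\p R_\p$); then $\Spec V \to \Spec R_\p$ would be an order-reflecting bijection of two-element chains with $V \neq R_\p$, contradicting (b).

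The core is (c) $\Rightarrow$ (a). By Propositions~\ref{pr:perilocal} and~\ref{pr:periflat} together with Richman's Theorem~\ref{thm:Rich}, it suffices, given a going-down overring $U$ of some $R_\m$ and a maximal ideal $\n$ of $U$, to prove $T = A$ where $A := (R_\m)_{\n \cap R_\m}$ and $T := U_\n$; here $A \subseteq T$ is a going-down local homomorphism of local rings, and $A$ (a localization of $R$ at a prime $\p_0$) is universally catenary, \Rone, and still satisfies (c). The first move is to show $T$ is integral over $A$. Since $A \to T$ is local, $\m_A = \m_T \cap A$, so each height one prime $\q \subseteq \m_A$ is lain over in $T$ by going-down, and Proposition~\ref{pr:Spec1} with Lemma~\ref{lem:R1over} produce $P \in \Spec^1 T$ with $T_P = A_\q$. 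Because $A'$ is Krull by Mori--Nagata and $A'_Q = A_{Q \cap A}$ with $Q \cap A$ of height one (Lemmas~\ref{lem:save} and~\ref{lem:R1over}), every $x \in T$ then lies in $T_P = A_{Q\cap A} = A'_Q$ for each $Q \in \Spec^1 A'$, whence $x \in \bigcap_{Q \in \Spec^1 A'} A'_Q = A'$; thus $T \subseteq A'$ and $T$ is an integral overring of $A$.

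It remains to verify that $\Spec T \to \Spec A$ is an order-reflecting bijection and to invoke (c). Surjectivity is lying-over, and order-reflection follows from going-down once the map is injective; for injectivity I would localize, noting that for each $\p \in \Spec A$ the extension $A_\p \subseteq T_\p$ is integral and going-down over the local, universally catenary, \Rone\ domain $A_\p$, so Lemma~\ref{lem:lochomo} forces $T_\p$ to be local, leaving a single prime of $T$ over $\p$. Hypothesis (c), applied at the prime $\p_0$ with $A = R_{\p_0}$, then yields $T = A$, closing the cycle. I expect the integrality step $T \subseteq A'$ to be the main obstacle: it is precisely where the order-theoretic going-down data must be upgraded to membership in the Krull intersection $\bigcap_{Q \in \Spec^1 A'} A'_Q = A'$, exploiting that the normalization $A'$ --- though not $A$ itself --- is Krull, and it is the only place the \Rone\ hypothesis and the behavior of height one primes under normalization (Lemma~\ref{lem:save}) are genuinely needed.
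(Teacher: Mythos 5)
Your proof is correct and follows essentially the same route as the paper's: the same cycle (a) $\implies$ (b) $\implies$ (c) $\implies$ (a), with the hard implication handled identically --- trap $S$ between $R_\p$ and its Krull integral closure via the bijection on height-one spectra (Proposition~\ref{pr:Spec1}, Lemmas~\ref{lem:R1over} and~\ref{lem:save}), use Lemma~\ref{lem:lochomo} to get injectivity of the Spec map, deduce an order-reflecting bijection, and invoke (c). The only cosmetic deviations are your detour through Proposition~\ref{pr:periflat} and Richman's theorem (the paper argues directly from the definition of perinormality) and your use of a DVR obtained from the normalization, rather than an arbitrary valuation ring centered on $\p$, in (b) $\implies$ (c); both are valid given the Noetherian hypothesis built into universal catenarity.
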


\begin{proof} We note that we only need the universal catenarity condition for the implication (c) $\implies$ (a).

(a) $\implies$ (b): Since perinormality localizes, we may assume that $(R,\p)$ is local.  Now let $S$ be a ring between $R$ and $K$ such that $\Spec S \rightarrow \Spec R$ is an order-reflecting bijection.  Thus $S$ satisfies going-down over $R$.   Since $S$ is local, the perinormality assumption on $R$ implies that $S$ is a localization of $R$.  As the Spec map is onto, we conclude that $R=S$.

(b) $\implies$ (c): To see that $R$ satisfies \Rone, let $\p$ be a height one prime of $R$.  Let $V$ be a valuation ring centered on $\p$.  Then all nonzero prime ideals of $V$ contract to $\p$, and their intersection $\q$ is also a prime ideal of $V$.  Since $\q$ contains no prime ideals other than itself and $(0)$, we have $\hgt \q =1$.  Now, the map $R_\p \rightarrow V_\q$ induces a bijection on Spec, so $R_\p = V_\q$, a valuation domain.  On the other hand, the second condition in (c) follows trivially from (b).

(c) $\implies$ (a): Let $(S,\n)$ be a going-down local overring of $R$.
  Let $\p = \n \cap R$.  Note that $R_\p$ satisfies \Rone, so that by Proposition~\ref{pr:Spec1}, the map $\Spec S \rightarrow \Spec R_\p$ induces a bijection $\Spec^1(S) \arrow{\sim} \Spec^1(R_\p)$ where by Lemma~\ref{lem:R1over}, the corresponding localizations of $S$ and $R_\p$ coincide.  Hence $S$ is \Rone.  Since  $R_\p$ is also universally catenary  it follows  by integrality and Lemma~\ref{lem:save} that the Spec map $\Spec (R_\p)' \rightarrow \Spec R_\p$ induces a bijection $\Spec^1 (R_\p)' \arrow{\sim} \Spec^1 R_\p$, where again the corresponding localizations of $R_\p$ and $(R_\p)'$ coincide. Thus, \[
R_\p \subseteq S \subseteq \bigcap_{Q \in \Spec^1(S)} S_Q = \bigcap_{P \in \Spec^1(R_\p)} (R_\p)_P = (R_\p)',
\]
where the last equality follows since $(R_\p)'$ is a Krull domain.   Hence $S$ is  integral over $R_\p$.

Next, we claim that the map $\Spec S \rightarrow \Spec R_\p$ is injective.  To see this, let $Q$ be a prime ideal of $R_\p$, and let $W := R_\p \setminus Q$.  Then the inclusion $(R_\p)_Q \subseteq S_W$ is integral, it satisfies going-down, and $Q S_W \neq S_W$.  Moreover, the integral closure of $R_\p$ is $R'_{R \setminus \p}$, a Krull domain.  Thus by Lemmas~\ref{lem:lochomo} and ~\ref{lem:save}, $S_W$ is local.  But this means that only one prime of $S$ lies over $Q$, whence the map $\Spec S \rightarrow \Spec R_\p$ is injective.

However, the map is also surjective, since $S$ is integral over $R_\p$.  Therefore the map is bijective.

Finally, we must show that the map is order-reflecting -- that is, if $\q_1 \subseteq \q_2$ in $R_\p$, then the corresponding primes in $S$ are also so ordered.  So let $Q_j \in \Spec S$ with $Q_j \cap R_\p = \q_j$, $j=1,2$.  By going-down, there is some $P \in \Spec S$ with $P \subseteq Q_2$ and $P \cap R_\p = \q_1 = Q_1 \cap R_\p$.  But then by the injectivity of the Spec map, $P=Q_1$, whence $Q_1 \subseteq Q_2$.  Hence, condition (c) applies and $R_\p = S$, whence $R$ is perinormal.
\end{proof}

Recall \cite{FeOl-min} that a ring extension $A \subseteq B$ is called \emph{minimal} if there are no rings properly between $A$ and $B$.

\begin{cor} \label{cor:min}
Let $(R,\m)$ be a universally catenary local domain.
 Assume that $\dim R \geq 2$ and that the map $R \rightarrow S$ is a minimal ring extension, where $S$ is the integral closure of $R$.  Then $R$ is perinormal if and only if $S$ is not local.
\end{cor}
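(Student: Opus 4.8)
The plan is to read off the result from the equivalence (a) $\iff$ (c) of Theorem~\ref{thm:equiv}, using minimality of $R \subset S$ to collapse condition (c) to a single question about the fibre of $\Spec S \to \Spec R$ over $\m$. Since $R$ is universally catenary, that equivalence is available, so it suffices to check, for each $\p \in \Spec R$, whether $R_\p$ is the unique ring between $R_\p$ and its integral closure inducing an order-reflecting bijection on spectra, together with the \Ronei\ condition.

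First I would dispose of all primes $\p \neq \m$. By the theory of minimal extensions \cite{FeOl-min}, such an extension localizes to an isomorphism away from its crucial maximal ideal; as $R$ is local, that ideal is forced to be $\m$, so $R_\p = S_\p$ for every $\p \neq \m$. Because integral closure commutes with localization and $S = R'$, this yields $R_\p = S_\p = (R_\p)'$, i.e.\ $R_\p$ is integrally closed for all $\p \neq \m$. Hence the inner clause of (c) is vacuous at each $\p \neq \m$, there being no ring strictly between $R_\p$ and $(R_\p)'$. Since $\dim R \geq 2$, every height-one prime is distinct from $\m$, so each such $R_\p$ is a one-dimensional integrally closed Noetherian local domain, hence a DVR; thus $R$ satisfies \Rone\ automatically.

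It remains to treat $\p = \m$, where $R_\m = R$ and the relevant integral closure is $S$. Here minimality is decisive: the only rings between $R$ and $S$ are $R$ and $S$, so the clause of (c) at $\m$ holds precisely when $\Spec S \to \Spec R$ is \emph{not} an order-reflecting bijection. I would then prove that this map is an order-reflecting bijection exactly when $S$ is local. Since $S$ is integral over the local ring $R$, the primes of $S$ over $\m$ are its maximal ideals, and over each $\p \neq \m$ there is a unique prime (from $R_\p = S_\p$); thus the map is bijective iff $S$ has a single maximal ideal, i.e.\ iff $S$ is local. A bijective integral extension is automatically order-reflecting: if $Q_1 \cap R \subseteq Q_2 \cap R$, going-up yields some $Q_2' \supseteq Q_1$ lying over $Q_2 \cap R$, and injectivity forces $Q_2' = Q_2$, whence $Q_1 \subseteq Q_2$. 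Combining, (c) holds iff $\Spec S \to \Spec R$ fails to be an order-reflecting bijection iff $S$ is not local, which by Theorem~\ref{thm:equiv} is equivalent to $R$ being perinormal.

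The step I expect to be the crux is the reduction away from $\m$: establishing that the minimal extension becomes an isomorphism after localizing at every prime other than $\m$ (equivalently, that the conductor $(R:S)$ is $\m$-primary). This is exactly what makes \Rone\ and the $\p \neq \m$ clauses of (c) cost-free and concentrates the whole problem on the fibre over $\m$; after that, the equivalence ``order-reflecting bijection $\iff S$ local'' is a brief going-up argument.
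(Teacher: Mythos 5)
Your proposal is correct and follows essentially the same route as the paper: both use the Ferrand--Olivier structure of a minimal integral extension to show $R_\p = S_\p$ for all $\p \neq \m$ (the paper does this via $\m$ being a common ideal of $R$ and $S$ together with Corollary~\ref{cor:commonideal}), deduce \Ronei\ from $\dim R \geq 2$, and reduce everything to whether the fibre of $\Spec S \to \Spec R$ over $\m$ is a singleton, i.e.\ whether $S$ is local. The only cosmetic difference is that you run both directions through condition (c) of Theorem~\ref{thm:equiv} and explicitly verify order-reflection for a bijective integral extension, whereas the paper uses (a) $\implies$ (b) for the forward direction and leaves that verification implicit.
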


\begin{proof}
%If $R'$ is local, then we clearly have a bijection $\Spec R' \rightarrow \Spec R$ [[RIHGT?]], so by Theorem~\ref{thm:equiv}, $R$ is not perinormal.
By \cite[Theorem 2.2]{FeOl-min}, $\m$ is also an ideal of $S$.  Now let $\p \in \Spec R$ with $\p \neq \m$.  Let $P, P' \in \Spec S$ with $P \cap R = P'\cap R = \p$.  Then by Corollary~\ref{cor:commonideal}, $S_P = R_\p = S_{P'}$, whence $P=P'$.  Also, by integrality any maximal ideal of $S$ must contract to $\m$.  Hence, there is a bijection between the nonmaximal primes of $R$ and those of $S$.%, and moreover for any $\p \in \Spec(R) \setminus \{\m\}$, we have $R_\p = R'_P$, where $P\in \Spec S$ contracts to $\p$.

Suppose $S$ is local. The only possibility of non-bijection of Spec happens at the maximal ideals, but it is clear that the unique maximal ideal of $S$ contracts to $\m$.  Thus, $R \rightarrow S$ induces a bijection on Spec even though $R \neq S$.  Then by the implication (a) $\implies$ (b) of Theorem~\ref{thm:equiv}, $R$ cannot be perinormal.

On the other hand, if $S$ is not local, then by minimality of the extension, there is no local integral overring of $R=R_\m$ other than $R$ itself.  Also, for any $\p \in \Spec R \setminus \{\m\}$, $R_\p$ is integrally closed (because it equals $S_P$, where $P$ contracts to $\p$), so again there is no local integral overring.  The same observation shows that $R$ satisfies \Rone, since none of the height one primes of $R$ are maximal.  Then by the implication (c) $\implies$ (a) of Theorem~\ref{thm:equiv}, $R$ is perinormal.
\end{proof}

We close this section by presenting an example that shows that Theorem ~\ref{thm:equiv} and Corollary ~\ref{cor:min} are in some sense best possible.

\begin{example}
The fact that  the last two results are false for arbitrary Noetherian rings can be demonstrated by %Example 2 of the Appendix of
\cite[Appendix, Example 2]{NagLR} with $m=0$.  This example consists of a Noetherian normal ring $S$ with exactly two maximal ideals $\m_1$ and $\m_2$ where ht $\m_1=1$ and ht $\m_2=2$ and a field $k\subset S$ such that the canonical map $k\to S/\m_i$ is an isomorphism for $i=1,2$.  If $R = k+J$, where $J=\m_1\cap \m_2$, then Nagata shows that $S$ is the integral closure of $R$. %One checks that the set $\Spec^1 R$ is in natural bijection with the height one prime ideals of $S$ contained in $\m_2$, where the corresponding localizations are equal.

We next claim that the set $\Spec^1 R$ is in natural bijection with the set $X$ of height one primes contained in $\m_2$, and that the corresponding localizations are equal.  To see this, let $\p \in \Spec^1 R$.  Then by integrality, there is some $P \in \Spec^1 S$ with $P \cap R = \p$.    But all height one primes of $S$ are in $\m_2$ except $\m_1$, and $\m_1 \cap R=J \supsetneq \p$.  Thus, $P \subset \m_2$.  Hence, contraction gives a surjective map $X \twoheadrightarrow \Spec^1R$.  Finally, if $P, P' \in X$ with $P \cap R = P'\cap R=\p$, then $S_P = R_\p= S_{P'}$ (by Corollary~\ref{cor:commonideal}), whence $P=P'$.

Hence $R$ satisfies \Rone\ and the ring $S_{\m_2}$ satisfies going-down over $R$.   However, the ring $S_{\m_2}$ cannot be a localization of $R$ as the maximal ideal of the former contracts to the maximal ideal of $R$. Therefore $R$ is not perinormal.  On the other hand, $S$ is a minimal ring extension of $R$ by \cite[Theorem 3.3(b)]{PP-amin}.  Hence there are no local rings strictly between $R$ and $R'=S$ and so condition (c) of Theorem ~\ref{thm:equiv} is satisfied.   Thus both Theorem ~\ref{thm:equiv} and Corollary ~\ref{cor:min} are false without some assumption on $R$.
\end{example}
\section{Gluing points of generalized Krull domains in high dimension}\label{sec:gluing}

In this section, we exhibit a method for constructing perinormal domains out of pre-existing generalized Krull domains, such that the new domains enjoy an arbitrary degree of branching-like behavior.  We explain how to interpret this construction either in the algebraic context of pullbacks or the geometric context of gluing points.

We begin with the following result, which may be known, but we include a proof for the convenience of the reader.

\begin{lemma}\label{lem:gdtower}
Let $R \subseteq S \subseteq T$ be ring extensions.  Let $X := \{P \in \Spec S \mid P \cap R\text{ is a maximal ideal}\}$.  Suppose that the induced map $(\Spec S \setminus X) \rightarrow (\Spec R \setminus \Max R)$ is injective and the inclusion $R \subseteq S$ satisfies INC.  If $R \subseteq T$ satisfies going-down, so does $S \subseteq T$.
\end{lemma}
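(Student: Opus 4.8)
The plan is to verify the definition of going-down for $S \subseteq T$ by pushing a given configuration down to $R$, invoking going-down for $R \subseteq T$ there, and then lifting the resulting prime back up to $S$ using the injectivity hypothesis. So I would begin with a strict chain $\mathfrak{p}_S \subsetneq \mathfrak{q}_S$ of primes in $S$ and a prime $Q \in \Spec T$ with $Q \cap S = \mathfrak{q}_S$; the goal is to produce $P \in \Spec T$ with $P \subsetneq Q$ and $P \cap S = \mathfrak{p}_S$. Set $\mathfrak{p}_R := \mathfrak{p}_S \cap R$ and $\mathfrak{q}_R := \mathfrak{q}_S \cap R$, and note that $Q \cap R = (Q \cap S) \cap R = \mathfrak{q}_R$.

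The first key step is to extract two facts from the hypothesis that $R \subseteq S$ satisfies INC. Since $\mathfrak{p}_S \subsetneq \mathfrak{q}_S$ are comparable primes of $S$, incomparability forbids them from lying over the same prime of $R$; hence $\mathfrak{p}_R \neq \mathfrak{q}_R$, i.e. $\mathfrak{p}_R \subsetneq \mathfrak{q}_R$. As $\mathfrak{p}_R$ is then strictly contained in the proper ideal $\mathfrak{q}_R$, it cannot be maximal, so $\mathfrak{p}_R \in \Spec R \setminus \Max R$; consequently $\mathfrak{p}_S \in \Spec S \setminus X$ as well, since its contraction $\mathfrak{p}_R$ is non-maximal.

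Next I would apply going-down for $R \subseteq T$ to the strict chain $\mathfrak{p}_R \subsetneq \mathfrak{q}_R$ and the prime $Q$ lying over $\mathfrak{q}_R$, obtaining $P \in \Spec T$ with $P \subseteq Q$ and $P \cap R = \mathfrak{p}_R$. The crucial observation is that $P \cap S$ is a prime of $S$ contracting to $P \cap R = \mathfrak{p}_R$, which is not maximal, so $P \cap S \in \Spec S \setminus X$. Now both $P \cap S$ and $\mathfrak{p}_S$ lie in $\Spec S \setminus X$ and both map to $\mathfrak{p}_R \in \Spec R \setminus \Max R$ under contraction, so the injectivity of $(\Spec S \setminus X) \rightarrow (\Spec R \setminus \Max R)$ forces $P \cap S = \mathfrak{p}_S$. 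Finally, because $\mathfrak{p}_R \neq \mathfrak{q}_R$ we automatically have $P \neq Q$, hence $P \subsetneq Q$; together with $P \cap S = \mathfrak{p}_S$ this is exactly the conclusion required for going-down of $S \subseteq T$.

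The only genuine content is the upgrade from ``$P \cap R = \mathfrak{p}_R$'' (which going-down over $R$ delivers) to ``$P \cap S = \mathfrak{p}_S$'' (which going-down over $S$ requires), and this is precisely the point where the two hypotheses must cooperate: INC guarantees that $\mathfrak{p}_R$ is non-maximal, placing both $\mathfrak{p}_S$ and the candidate $P \cap S$ in the domain $\Spec S \setminus X$ on which the injectivity hypothesis is available, and that injectivity then pins down $P \cap S$ uniquely. In writing this up I would take care to confirm that contraction is well-defined as a map $\Spec S \setminus X \rightarrow \Spec R \setminus \Max R$ (immediate from the definition of $X$) and to keep the strict containments straight, which the INC step handles by making $\mathfrak{p}_R \subsetneq \mathfrak{q}_R$ strict.
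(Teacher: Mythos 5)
Your proof is correct and follows essentially the same route as the paper's: contract the chain to $R$, use INC to see that the contractions are distinct (hence the lower one is non-maximal and lies outside $\Max R$), apply going-down for $R \subseteq T$, and then use the injectivity hypothesis on $\Spec S \setminus X$ to identify $P \cap S$ with $\mathfrak{p}_S$. The only difference is that you spell out a few routine verifications (non-maximality of $\mathfrak{p}_R$, strictness of $P \subsetneq Q$) that the paper leaves implicit.
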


\begin{proof}
Let $P_1 \subset P_2$ be a chain of two prime ideals of $S$ such that there exists $Q_2 \in \Spec T$ with $Q_2 \cap S = P_2$.  Then setting $\p_j := P_j \cap R$, $j=1,2$, we have $\p_1 \neq \p_2$ (by INC) and $Q_2 \cap R = \p_2$.  Then by the going-down hypothesis on the extension $R \subseteq T$, there is a prime ideal $Q_1$ of $T$ with $Q_1 \subseteq Q_2$ and $Q_1 \cap R = \p_1$.  But then we have $P_1 \cap R = \p_1 = Q_1 \cap R = (Q_1 \cap S) \cap R$, so by injectivity of the map in question (since $\p_1$ is a non-maximal ideal of $R$), we have $P_1 = Q_1 \cap S$, completing the proof.
\end{proof}

\begin{thm}\label{thm:gluing}
Let $S$ be a semilocal generalized Krull domain and let $\m_1, \dotsc, \m_n$ be its maximal ideals.   Assume that $n\geq 2$, and  $\hgt \m_j \geq 2$ for all $1\leq j\leq n$.  Further suppose that the fields $S/\m_i$ are all isomorphic to the same field $k$.  For each $i=1,2,\ldots,n$ fix an isomorphism $\alpha_i:k \to S/\m_i$.
 Let $R$ be the pullback in the diagram
 \[
\xymatrix{R \ar[r]^f \ar[d]^g
& S \ar[d]^p \\
k \ar[r]^h & S/J}
\]
where $J := \m_1 \cap \cdots \cap \m_n = \prod_{j=1}^n \m_j$, $p$ is the canonical projection, and $h$ is the composition of the maps $k\to \prod_{i=1}^n S/\m_i$ $($given by $\lambda \mapsto (\alpha_1(\lambda), \ldots,\alpha_n(\lambda))$ and the isomorphism between $\prod_{i=1}^n S/\m_i$ and $S/J$ $($given by the Chinese Remainder Theorem$)$.
   %obtained embedding along the diagonal given by $S/J\simeq \prod_{i=1}^n S/\m_i $\lambda \mapsto  $p$ consists of sending $s \mapsto (s+\m_1, \ldots, s+\m_n)$ and the %map $i$ is the diagonal map, sending $\lambda \mapsto (\lambda, \ldots, \lambda)$.
%Then up to isomorphism, we may write $R
 Then $R$ is local and perinormal.  Also, $R$ is globally perinormal if $S$ is.  But $R$ is not integrally closed, because its integral closure is $S$.
\end{thm}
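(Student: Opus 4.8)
The plan is to exploit the description of $R$ as the ``glued'' ring determined by the common ideal $J = \m_1 \cap \cdots \cap \m_n$ of $R$ and $S$, reducing every statement about overrings of $R$ to the corresponding statement about the generalized Krull (hence, by Theorem~\ref{thm:Krperi}, perinormal) domain $S$. First I would record the elementary structure. By the Chinese Remainder Theorem $S/J \cong \prod_i S/\m_i \cong k^n$ is finite over $k$, and since $J \subseteq R$ with $R/J \cong k$, the ring $S$ is a module-finite, hence integral, extension of $R$ with $\Frac R = \Frac S =: K$ by Corollary~\ref{cor:samefrac}. A generalized Krull domain is an intersection of its height-one valuation localizations and so is integrally closed; transitivity of integral closure then shows $S$ is the integral closure of $R$, and $R \neq S$ because $R/J = k \subsetneq k^n = S/J$ (here $n \geq 2$), so $R$ is not integrally closed. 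Finally, integrality makes every maximal ideal of $R$ the contraction of some $\m_i$, and each $\m_i \cap R = J$ as $J$ is maximal in $R$; thus $J$ is the unique maximal ideal and $R$ is local.

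Next I would set up the height-one dictionary. Because $\hgt \m_j \geq 2$, no height-one prime of $S$ contains $J$, so Corollary~\ref{cor:commonideal} matches $\Spec^1 S$ with $\Spec^1 R$ by contraction, with $R_\p = S_P$ for corresponding primes; hence $R$ satisfies \Rone\ and $\bigcap_{\p \in \Spec^1 R} R_\p = \bigcap_{P \in \Spec^1 S} S_P = S$. The proof then turns on a single dichotomy for a going-down overring $T \supseteq R$: whether some prime of $T$ lies over the glued point $J$. Suppose a maximal ideal $\n$ of $T$ satisfies $\n \cap R = J$. Since every prime of the local ring $R$ lies in $J \subseteq \n$, Proposition~\ref{pr:Spec1} applied to $R \subseteq T_\n$ yields a bijection $\Spec^1 T_\n \cong \Spec^1 R$ with equal localizations, whence $T_\n \subseteq \bigcap_\p R_\p = S$. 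Then $J$ is common to $R$, $T_\n$ and $S$, and $T_\n/J$ is a local, reduced, finite $k$-subalgebra of $k^n$, i.e.\ a subfield of $k^n$ containing the diagonal copy of $k$; such a subfield must be the diagonal itself, so $T_\n/J = R/J$ and $T_\n = R$. As $R \subseteq T \subseteq T_\n = R$, this forces $T = R$, a (trivial) localization of $R$.

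In the complementary case no prime of $T$ lies over $J$, i.e.\ $JT = T$. Writing $1 = \sum_i j_i t_i$ with $j_i \in J$ and $t_i \in T$ and multiplying by an arbitrary $s \in S$ (so that $s j_i \in J \subseteq R \subseteq T$) shows $S \subseteq T$; thus $T$ is an overring of $S$, and Lemma~\ref{lem:gdtower} applied to $R \subseteq S \subseteq T$ -- whose injectivity hypothesis is exactly the contraction bijection away from $J$ and whose INC hypothesis holds since $R \subseteq S$ is integral -- transmits going-down to $S \subseteq T$. For perinormality, $T$ is local, so perinormality of $S$ gives $T = S_P$; since $JT = T$ forces $J \not\subseteq P$, Corollary~\ref{cor:commonideal} identifies $S_P = R_{P \cap R}$, a localization of $R$. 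For global perinormality I would instead invoke global perinormality of $S$ to write $T = S_{W'}$; here $J S_{W'} = S_{W'}$ forces $W'$ to meet each $\m_i$, so a product $w$ of such elements lies in $J \cap W'$, and Lemma~\ref{lem:locms} then identifies $T = S_{W'}$ with $R_{W' \cap R}$. Together the two cases give perinormality, and global perinormality under the stated hypothesis.

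The main obstacle is the closed-point case $\n \cap R = J$: the whole argument depends on showing that over the single glued point there is no room for a nontrivial going-down overring. This combines two ingredients that must be deployed together -- the generalized Krull identity $\bigcap_{\p \in \Spec^1 R} R_\p = S$, which traps $T_\n$ inside the integral closure, and the rigidity that the only subfield of $k^n$ containing the diagonal is the diagonal, which pins $T_\n$ down to $R$. The delicate points are verifying that $T_\n/J$ is honestly a field (local, reduced, and finite over $k$) rather than merely a subalgebra, and that $J$ persists as a common ideal at each stage; the remaining reductions are routine applications of the common-ideal corollaries and the going-down tower lemma.
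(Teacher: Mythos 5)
Your proposal is correct and follows essentially the same route as the paper's proof: the same preliminary dictionary (integrality of $S$ over $R$ via lifting monic polynomials mod the common ideal $J$, equality of fraction fields, localness of $R$, the bijection $\Spec^1 S \leftrightarrow \Spec^1 R$ with matching localizations and hence \Rone), followed by the same dichotomy on whether $JT=T$, with the same key ingredients (Lemma~\ref{lem:gdtower}, Proposition~\ref{pr:Spec1}, the common-ideal corollaries, perinormality of $S$ from Theorem~\ref{thm:Krperi}, and the rigidity of the diagonal $k \hookrightarrow k^n$). Your only deviations are sound local simplifications: you get localness of $R$ by lying-over rather than by computing the Jacobson radical, you get $S \subseteq T$ when $JT=T$ from $1=\sum j_i t_i$ rather than by intersecting localizations at maximal ideals of $T$, and when $JT \neq T$ you localize $T$ at a maximal ideal over $J$ before trapping it in $S$, which lets you bypass Lemma~\ref{lem:lochomo} (the paper instead invokes that lemma to show $T$ itself is local before running the same subfield-of-$k^n$ argument).
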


\begin{proof} We first note that it follows from the properties of a pullback that as $h$ is an injection (resp. $p$ is a surjection),  $f$ is an injection (resp. $g$ is a surjection).  Thus we can view $R$ as a subring of $S$ where $J=$ ker $g$  is a common nonzero ideal of both rings.  Then it follows from Corollary~\ref{cor:samefrac} that $R$ and $S$ have the same field of fractions.

Next we show  that $S$ is integral over $R$ (and hence equals the integral closure of $R$).  To see this, let $s\in S$.  Since $J$ is a common ideal of $R$ and $S$, we have \[
k \cong R/J \hookrightarrow S/J = S/(\m_1 \cap \ldots \cap \m_n) \cong \prod_{j=1}^n (S/\m_j) \cong k \times \cdots \times k,
\]
where the composite map is just the diagonal embedding.  Now $k \times \cdots \times k$ is integral over $k$, which means that $S/J$ is integral over $R/J$.  In particular, there is some monic $g \in (R/J)[X]$ such that $g(\bar s) = 0$.  But then $g$ lifts to a monic polynomial $G \in R[X]$ such that $G(s) \in J$.  Say $G(s) = j \in J$.  Then $H(X) := G(X) - j$ is a monic polynomial over $R$ such that $H(s)=0$.
It follows that the integral closure of $R$ is $S$.

Now we claim that $R$ is local.  This will follow if we can show that $J$ is the Jacobson radical of $R$, since we already have that $J$ is a maximal ideal of $R$.  To this end, it suffices to show that for each $j\in J$, $1-j$ is a unit of $R$.   If not, then $1-j \in \p$ for some prime ideal of $R$, so that $1-j \in \p S$.  But since $1-j$ is a unit of $S$ (since $J$ is the Jacobson radical of $S$), it follows that $\p S = S$, which contradicts the lying over property of the integral extension $R \subseteq S$.  This contradiction proves the claim.

Before showing that $R$ is perinormal, we collect some observations about the relationship between $\Spec R$ and $\Spec S$.
Let $P\in\Spec S$ and  $\p=P\cap R$.   By integrality $P$ is a nonmaximal ideal of $S$ if and only if $\p$ is a nonmaximal ideal of $R$.  Furthermore in this case by Corollary~\ref{cor:commonideal}, we have $R_\p = S_P$,  whence ht $P = $ ht $\p$.   Since we are assuming that no maximal ideal of $S$ has height 1, each height one prime of $S$ must contract to a height one prime of $R$.  Moreover  by integrality each height one prime of $R$ is lain over by a prime of $S$.   Thus the Spec map induces a bijection $ \Spec^1(S)\to \Spec^1(R)$ where the corresponding localizations coincide.  In particular $R$ satisfies \Rone.

Now let $T$ be an overring of $R$ such that $R \subseteq T$ satisfies going-down.

\noindent\textbf{Case 1:} Suppose $JT=T$.  Then $S \subseteq T$.  To see this, let $\n$ be a maximal ideal of $T$.  Since $J \nsubseteq \n$, we have $\n \cap R = \p \subsetneq J$.  Then there is some nonmaximal $P \in \Spec S$ with $P \cap R = \p$ (since $S$ is integral over $R$), whence  we have $R_\p = S_P$.  Hence, $S \subseteq S_P = R_\p \subseteq T_\n$.  Since $\n$ was an arbitrary maximal ideal of $T$, it follows that $S \subseteq \bigcap_{\n \in \Max T} T_\n = T$.

Next, since the map $\Spec S \rightarrow \Spec R$ is injective on non-maximal ideals, $S$ is integral over $R$, and $R \subseteq T$ satisfies going-down, it follows from Lemma~\ref{lem:gdtower} that the extension $S \subseteq T$ satisfies going-down.
Thus, if $T$ is local or $S$ is globally perinormal, we have that $T = S_W$ for some multiplicative subset $W$ of $S$.  On the other hand, for any maximal ideal $\m_i$ of $S$, we have $S_W = T=JT \subseteq \m_i T = \m_i S_W$, so $\m_i \cap W \neq \emptyset$.  Let $z_i \in \m_i \cap W$, and let $z := \prod_{i=1}^n z_i$.  Note that $z \in J$ and that $z$ is a unit in $T$.  Thus by Corollary~\ref{cor:commonideal}, $T=S_W = (S_z)_V = (R_z)_V = R_{V'}$ for appropriate multiplicative sets $V$ and $V'$, so that $T$ is a localization of $R$.

\noindent\textbf{Case 2:} On the other hand if $JT \neq T$, then by Proposition~\ref{pr:Spec1}, the map $\Spec T \rightarrow \Spec R$ induces a bijection $\Spec^1(T) \arrow{\sim} \Spec^1(R)$, and by Lemma ~\ref{lem:R1over} the corresponding localizations are equal.  Since we have a similar bijection between  $\Spec^1(S)$ and $\Spec^1(R)$, we get
 \[R \subseteq T \subseteq \bigcap_{P \in \Spec^1(T)} T_P = \bigcap_{Q \in \Spec^1(S)} S_Q = S.
\]
where the last equality follows from $S$ being a generalized Krull domain. It follows that $T$ is integral over $R$ and that $J$ is a common ideal to $R$, $S$, and $T$, so we have \[
k \cong R/J \subseteq T/J \subseteq S/J \cong k \times \cdots \times k.
\]
Thus, $T/J$ must be isomorphic to a product of some finite number of copies of $k$.  But by Lemma~\ref{lem:lochomo}, $T$ is local.  Hence, $T/J$ is local as well.  Therefore, $T/J \cong k$, whence $T=R$.
\end{proof}

\begin{example}\label{ex:gluepoints}
For a geometrically relevant example of the above, let $B = k[X,Y]$, let $p_j = (x_j, y_j) \in k^2$ be distinct ordered pairs (points of $k^2$) for $1\leq j \leq t$, let $\n_j := (X-x_j, Y-y_j)$ (the maximal ideal corresponding to $p_j$), $J :=\bigcap_{j=1}^t \n_j$, and $A :=k+\bigcap_{j=1}^t \n_j$.  Note that $J$ is a maximal ideal of $A$.  Then by the above theorem, the ring $A_J$ is perinormal (even globally perinormal!), but it isn't normal unless $t=1$ (since there are $t$ maximal ideals lying over $JA_J$ in the integral closure of $A_J$.)

By \cite[Th\'eor\`eme 5.1]{Fe-pinch}, $\Spec A$ can be seen, quite precisely, as the algebro-geometric result of gluing together the points $p_1, \ldots, p_t$ of $\A^2_k$ together, and $\Spec A_J$ is the (geometric) localization at the resulting singular point.
\end{example}

\section{Global perinormality}\label{sec:global}
Next, we explore the related but quite distinct concept of \emph{global} perinormality.  In particular, for Krull domains, there is a strong and surprising relationship to the divisor class group.  We illustrate with examples from algebraic number theory.

\begin{prop}
Let $R$ be a globally perinormal domain, and let $W$ be a multiplicative subset of $R$.  Then $R_W$ is globally perinormal as well.
\end{prop}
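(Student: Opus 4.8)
The plan is to adapt the forward direction of Proposition~\ref{pr:perilocal}, where the analogous statement was established for ordinary (non-global) perinormality; the only change is that we are no longer permitted to assume the test overring is local. Write $K$ for the common fraction field. I would begin by fixing an arbitrary overring $S$ of $R_W$ such that the inclusion $R_W \subseteq S$ satisfies going-down, and reduce the goal to exhibiting $S$ as a localization of $R_W$ at a multiplicative set.

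The first key step is to transfer the going-down hypothesis from $R_W$ down to $R$. Since $R \subseteq R_W$ is a localization, it is flat and hence satisfies going-down; because going-down is transitive along the tower $R \subseteq R_W \subseteq S$, the inclusion $R \subseteq S$ satisfies going-down. As $S$ is visibly an overring of $R$ (we have $R \subseteq R_W \subseteq S \subseteq K$), the hypothesis that $R$ is globally perinormal then produces a multiplicative subset $V$ of $R$ with $S = R_V$.

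It then remains to recognize $S = R_V$ as a localization of $R_W$ rather than merely of $R$, which is precisely what the definition of global perinormality for $R_W$ requires. Here I would use that the containment $R_W \subseteq S = R_V$ forces every element of $W$ to be a unit in $R_V$, so that further inverting $W$ changes nothing: $R_V = R_{VW}$. Viewing $V$ as a multiplicative subset of $R_W$ instead, one computes $(R_W)_V = R_{WV} = R_{VW} = R_V = S$, so $S$ is a localization of $R_W$, completing the proof that $R_W$ is globally perinormal.

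The argument is essentially routine, and no genuinely hard step arises; the only point demanding care is the bookkeeping of the several multiplicative sets in the final step, namely verifying that the localization inverting $W$ in $R_V$ is trivial so that $S$ is genuinely presented as a localization of $R_W$ (and not just of the smaller base ring $R$).
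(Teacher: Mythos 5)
Your proof is correct and follows essentially the same route as the paper's: transfer the going-down hypothesis from $R_W\subseteq S$ to $R\subseteq S$, invoke global perinormality of $R$ to write $S=R_V$, and then observe that $S$ is also a localization of $R_W$. The only cosmetic difference is that you justify the transfer by transitivity of going-down (which the paper itself uses elsewhere), whereas the paper argues via Lemma~\ref{lem:surj} and the identification $(R_W)_{Q\cap R_W}=R_{Q\cap R}$; your handling of the final bookkeeping (elements of $W$ are units in $R_V$, so $(R_W)_V=R_V$) is, if anything, slightly more careful than the paper's.
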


\begin{proof}
Let $S$ be an overring of $R_W$ that satisfies going-down.  Let $Q \in \Spec S$.  Then by Lemma~\ref{lem:surj}, the map $\Spec S_Q \rightarrow \Spec (R_W)_{Q \cap R_W}$ is surjective.  But $(R_W)_{Q \cap R_W} = R_{Q \cap R}$ canonically, so that the map $\Spec S_Q \rightarrow \Spec R_{Q\cap R}$ is surjective.  Since $Q\in \Spec S$ was arbitrarily chosen, Lemma~\ref{lem:surj} applies again to show that the map $R \rightarrow S$ satisfies going-down, whence since $R$ is globally perinormal, $S$ must be a localization of $R$.  That is, $S = R_V$ for some multiplicative subset $V$ of $R$.  But since $R_W \subseteq S$, we have $W \subseteq V$, so that $V' := VR_W$ is a multiplicative subset of $R_W$, and $S=R_V = (R_W)_{V'}$, finishing the proof that $R_W$ is globally perinormal.
\end{proof}

We next give a result analogous to Proposition~\ref{pr:periflat}.

\begin{prop}\label{pr:gperiflat}%thank ref
Let $R$ be a perinormal domain.  Then $R$ is globally perinormal if and only if every flat overring of $R$ is a localization of $R$.
\end{prop}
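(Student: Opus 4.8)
The plan is to lean entirely on Proposition~\ref{pr:periflat}, which characterizes perinormality by the condition that every going-down overring of $R$ is flat over $R$. The crucial observation I would make first is that, for a perinormal domain, the going-down overrings and the flat overrings form \emph{exactly the same} collection of rings. Indeed, flatness always implies going-down (the classical fact recalled in the introduction), and Proposition~\ref{pr:periflat} supplies the reverse inclusion precisely under the perinormality hypothesis. Once these two classes are identified, the biconditional becomes a matter of substituting one description for the other in the definition of global perinormality.

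For the forward implication, I would assume $R$ is globally perinormal and take an arbitrary flat overring $S$ of $R$. Flatness gives that $R \subseteq S$ satisfies going-down, so global perinormality forces $S$ to be a localization of $R$ at a multiplicative set. For the reverse implication, I would assume that every flat overring of $R$ is a localization and take an arbitrary overring $S$ satisfying going-down over $R$. Since $R$ is perinormal, Proposition~\ref{pr:periflat} makes $S$ flat over $R$, and the hypothesis then forces $S$ to be a localization of $R$; as $S$ was an arbitrary going-down overring, this is exactly the statement that $R$ is globally perinormal.

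I do not expect any genuine obstacle here, since essentially all of the work has already been done in Proposition~\ref{pr:periflat}: the argument amounts to an identification of two classes of overrings followed by unwinding the definition of global perinormality. The only point deserving explicit mention is that ``flat overring'' is squeezed between ``going-down overring'' from below (always valid) and from above (valid via perinormality), so that the two notions genuinely coincide for perinormal $R$; it is this coincidence, rather than either inclusion alone, that makes both directions go through.
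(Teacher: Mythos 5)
Your proof is correct and follows essentially the same route as the paper: flatness gives going-down for the forward direction, and Proposition~\ref{pr:periflat} upgrades going-down to flat for the converse. No gaps.
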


\begin{proof}
Suppose $R$ is globally perinormal.  Let $S$ be a flat overring of $R$.  Then $S$ satisfies going-down over $R$ (by flatness), so by global perinormality, $S = R_W$ for some multiplicative set $W \subseteq R$.

Conversely, suppose every flat overring of $R$ is a localization of $R$.  Let $S$ be an overring that satisfies going-down over $R$.  By perinormality and Proposition~\ref{pr:periflat}, $S$ is flat over $R$, whence by assumption $S$ is a localization of $R$.  Hence, $R$ is globally perinormal.
\end{proof}

\begin{prop}\label{pr:delta}
Let $R$ be a generalized Krull domain and let $S$ be a going-down overring of $R$.  Then \[
S = \bigcap_{\p \in \Delta} R_\p =: R_\Delta,
\]
where $\Delta := \{\p \in  \Spec^1(R) \mid \p S \neq S\}$.
\end{prop}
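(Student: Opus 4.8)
The plan is to establish the two inclusions $S \subseteq R_\Delta$ and $R_\Delta \subseteq S$ separately. The first is a direct descendant of the argument in Theorem~\ref{thm:Krperi}. Since $R$ satisfies \Rone, Proposition~\ref{pr:Spec1} shows that the contraction map $\Spec^1(S) \to \Spec^1(R)$ is injective with image exactly $\Delta$, and Lemma~\ref{lem:R1over} identifies the corresponding localizations as $S_P = R_{P \cap R}$. Because a ring sits inside each of its localizations, this gives $S \subseteq \bigcap_{P \in \Spec^1(S)} S_P = \bigcap_{\p \in \Delta} R_\p = R_\Delta$.

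For the reverse inclusion I would exploit the fact that any integral domain is the intersection of its localizations at maximal ideals, $S = \bigcap_{M \in \Max S} S_M$, and reduce to the local situation already handled by Theorem~\ref{thm:Krperi}. Fixing a maximal ideal $M$ of $S$ and setting $\m := M \cap R$, I note that $R_\m$ is again a generalized Krull domain by \cite[Corollary 43.6]{Gil-MIT}, while $S_M$ is a local overring of $R_\m$ inheriting going-down from $R \subseteq S$ under localization. Since the maximal ideal of $S_M$ contracts to the maximal ideal $\m R_\m$ of $R_\m$, Theorem~\ref{thm:Krperi} (whose proof shows that a local going-down overring of a generalized Krull domain is the localization at the contracted maximal ideal) yields $S_M = R_\m$.

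It then remains to verify that $R_\Delta \subseteq S_M = R_\m$ for every such $M$, for then $R_\Delta \subseteq \bigcap_M S_M = S$. Since $R_\m = \bigcap_{\p \subseteq \m,\ \hgt \p = 1} R_\p$ by property (1) for the generalized Krull domain $R_\m$, it suffices to show that every height one prime $\p$ of $R$ contained in $\m$ already lies in $\Delta$. Given such a $\p$, the equality $S_M = R_\m$ makes $\p S_M$ a height one prime of $S_M$, which contracts to a prime $P := \p S_M \cap S$ of $S$ with $P \cap R = \p$ and $S_P = R_\p$ of dimension one; hence $\p = P \cap R$ lies in the image $\Delta$ of $\Spec^1(S) \to \Spec^1(R)$. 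This gives the containment of index sets $\{\p \subseteq \m : \hgt \p = 1\} \subseteq \Delta$, and therefore $R_\Delta \subseteq R_\m$, completing the reverse inclusion.

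The hard part will be precisely this reverse inclusion $R_\Delta \subseteq S$. A naive approach through the conductor $\{r \in R : rx \in S\}$ breaks down because in a generalized Krull domain of dimension at least two a proper nonzero ideal need not be contained in any height one prime, so failure of $x \in S$ cannot be detected at the height one primes alone. Localizing at the (possibly high-height) maximal ideals of $S$ sidesteps this difficulty by pushing the problem into the local generalized Krull setting governed by Theorem~\ref{thm:Krperi}; the only remaining care is the bookkeeping showing that every height one prime below $\m$ survives in $S$, which is what forces $R_\Delta$ inside each $R_\m$.
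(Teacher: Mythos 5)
Your proof is correct and follows essentially the same route as the paper's: both hinge on writing $S=\bigcap_{M\in\Max S}S_M$, identifying each $S_M$ with $R_{M\cap R}$ via Theorem~\ref{thm:Krperi}, and then invoking the defining intersection property of the generalized Krull domains $R_{M\cap R}$. The only cosmetic difference is that the paper packages both inclusions into a single chain of equalities, whereas you establish $S\subseteq R_\Delta$ separately via Proposition~\ref{pr:Spec1} and Lemma~\ref{lem:R1over}.
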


\begin{proof}
For any maximal ideal $\m$ of $S$, $S_\m$ is local overring of $R$ such that $R \subseteq S_\m$ satisfies going-down.  Hence by Theorem~\ref{thm:Krperi}, $S_\m$ is a localization of $R$ -- i.e., $S_\m = R_{\m \cap R}$.  Now, for every $\p \in \Delta$, there is some such $\m \in \Max S$ with $\p S \subseteq \m$, whereas when $\p \in \Spec^1(R) \setminus \Delta$, there is no such $\m$.  Also, every such $R_{\m \cap R}$ is generalized Krull, by \cite[Corollary 43.6]{Gil-MIT}.  %The last fact that we need is Proposition~\ref{pr:Spec1}, which provides a bijection between $\Spec^1(S)$ and $\Delta$.
Thus: \begin{align*}
S &= \bigcap_{\m \in \Max S} S_\m = \bigcap_{\m \in \Max S} R_{\m \cap R} = \bigcap_{\m \in \Max S} \left(\bigcap_{P \in \Spec^1(R_{\m \cap R})} (R_{\m\cap R})_P \right)\\
&= \bigcap_{\m \in \Max S} \left(\bigcap_{\p \in \Spec^1(R),\ \p S \subseteq \m} R_\p \right) = \bigcap_{\p \in \Delta} R_\p = R_\Delta.
\end{align*}
\end{proof}

%Next, we recall the basic theory of divisor class groups of Krull domains, as in \cite{Fos-Cl}.  For a Krull domain $R$, the \emph{divisor group} $\Div(R)$ of $R$ is the free abelian group on the set $\Spec^1(R)$.  It follows from the properties of Krull domains that any nonzero principal ideal $gR$ of $R$ may be represented uniquely in the form $gR= \bigcap_{j=1}^k \p_j^{(n_j)}$, where $k$ and each $n_j$ are positive integers, each $\p_j \in \Spec^1(R)$, and $\p^{(n)} := \p^n R_\p \cap R$.  We denote by $\Prin(R)$ the subgroup of $\Div(R)$ generated by all sums of the form $\sum_{j=1}^k n_j [\p_j]$ where $\bigcap_{j=1}^k \p_j^{(n_j)}$ is a principal ideal.  Then the \emph{divisor class group} is just the quotient group $\Cl(R) := \Div(R) / \Prin(R)$.  $R$ is a unique factorization domain if and only if $\Cl(R)$ is trivial.  When $R$ is a Dedekind domain, $\Cl(R)$ is often simply called the \emph{class group} of $R$.

The next theorem involves the divisor class group; hence we restrict our attention to Krull domains (rather than generalized Krull domains), which is where the theory of the divisor class group is most well developed.

\begin{thm}\label{thm:class}
Let $R$ be a Krull domain.  \begin{enumerate}[label=\emph{(\arabic*)}]
\item\label{it:classglobal} If the divisor class group $\Cl(R)$ of $R$ is torsion, then $R$ is globally perinormal.
\item\label{it:classglobal1} The converse holds when $\dim R=1$.
\end{enumerate}
\end{thm}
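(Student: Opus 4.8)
The plan is to analyze going-down overrings through Proposition~\ref{pr:delta}, which already tells us that any such overring $S$ has the shape $S = R_\Delta = \bigcap_{\p \in \Delta} R_\p$ with $\Delta = \{\p \in \Spec^1(R) \mid \p S \neq S\}$; the whole question of global perinormality thus reduces to deciding when such an intersection is a localization of $R$. The dictionary I would set up rests on the standard Krull-domain facts that $R_W = \bigcap_{\p \cap W = \emptyset} R_\p$ for any multiplicative set $W$, that an element of $K$ lies in $R$ exactly when its divisor is effective, that the units of $R$ are the elements with trivial divisor, and that the defining family $\{R_\p\}_{\p \in \Spec^1(R)}$ of a Krull domain is intrinsic (so that an equality $R_\Delta = R_{\Delta'}$ forces $\Delta = \Delta'$).

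For part~\ref{it:classglobal}, I first note that $\Cl(R)$ torsion means every class $[\p]$ (for $\p \in \Spec^1(R)$) has finite order, say $n_\p [\p] = 0$; hence the effective divisor $n_\p \cdot \p$ is principal, $n_\p \cdot \p = \Div(x_\p)$ for some $x_\p \in R$ supported only at $\p$ (that is, $v_\p(x_\p) = n_\p$ while $v_\q(x_\p) = 0$ for $\q \neq \p$). Given a going-down overring $S = R_\Delta$, set $\Sigma := \Spec^1(R) \setminus \Delta$. For $\p \in \Sigma$ the element $x_\p$ has zero valuation at every height one prime of $S$ (these correspond to $\Delta$ by Proposition~\ref{pr:Spec1}), so $x_\p$ is a unit in the Krull domain $S$. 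Letting $W$ be the multiplicative set generated by $\{x_\p \mid \p \in \Sigma\}$, a height one prime $\q$ meets $W$ exactly when $\q \in \Sigma$, since each $x_\p$ is supported only at $\p$. Therefore $R_W = \bigcap_{\q \cap W = \emptyset} R_\q = \bigcap_{\q \in \Delta} R_\q = R_\Delta = S$, exhibiting $S$ as a localization and proving global perinormality.

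For part~\ref{it:classglobal1}, the simplifying observation is that when $\dim R = 1$ every chain of primes has the form $0 \subset \m$, so going-down over $R$ holds automatically for every overring; combined with Proposition~\ref{pr:delta}, every overring of $R$ is of the form $R_\Delta$. I argue the contrapositive: if $\Cl(R)$ is not torsion then, since $\Cl(R)$ is generated by the classes $[\p]$, some $[\p]$ has infinite order. Take $\Delta := \Spec^1(R) \setminus \{\p\}$ and $S := R_\Delta$, an overring of $R$. If $S$ were a localization $R_W$, then comparing defining families $R_\Delta = R_W = \bigcap_{\q \cap W = \emptyset} R_\q$ forces $\{\q \mid \q \cap W \neq \emptyset\} = \{\p\}$; thus some $x \in W$ lies in $\p$ and in no other height one prime, so $\Div(x) = v_\p(x) \cdot \p$ with $v_\p(x) > 0$, giving $v_\p(x) [\p] = 0$ and contradicting the infinite order of $[\p]$. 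Hence $S$ is not a localization, and $R$ is not globally perinormal.

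The main obstacle is assembling and correctly invoking the Krull-theoretic dictionary, especially the identity $R_W = \bigcap_{\p \cap W = \emptyset} R_\p$ and the intrinsic uniqueness of the family of essential valuations, which is what lets me read the index set $\Delta$ off the ring $R_\Delta$ and hence pin down exactly when a single missing prime can be cut out by a multiplicative set. Once this dictionary is in place, both directions reduce to the clean statement that $R_\Delta$ is a localization precisely when the vanishing set $\Sigma$ can be realized as $\{\q \mid \q \cap W \neq \emptyset\}$, which for a single prime is exactly the torsion condition on $[\p]$.
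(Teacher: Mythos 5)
Your argument is correct, but it takes a genuinely different and more self-contained route than the paper. For part (1) the paper does not touch divisors directly: it reduces via Proposition~\ref{pr:gperiflat} and Theorem~\ref{thm:Krperi} to showing every flat overring is a localization, uses Richman's Theorem~\ref{thm:Rich} to write such an overring as an intersection of localizations $R_{\m\cap R}$, and then quotes Heinzer--Roitman's result that for a Krull domain with torsion class group any intersection of localizations at primes is a localization. For part (2) the paper simply cites the classical Davis/Goldman/Gilmer--Ohm theorem that a Dedekind domain has torsion class group iff every overring is a localization. You instead start from Proposition~\ref{pr:delta} to put every going-down overring in the form $R_\Delta$ and then do the divisor-theoretic bookkeeping by hand: in (1) you manufacture, for each $\p\in\Sigma$, an element $x_\p\in R$ with $\Div(x_\p)=n_\p\cdot\p$ and localize at the multiplicative set they generate; in (2) you show that omitting a single non-torsion prime yields an overring that cannot be cut out by any multiplicative set. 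In effect you reprove the relevant special cases of the two cited theorems, which makes the argument self-contained at the cost of leaning on two pieces of standard Krull-domain theory that you should cite explicitly (e.g.\ to Fossum's book): the identity $R_W=\bigcap_{\p\cap W=\emptyset}R_\p$ over $\p\in\Spec^1(R)$, and the irredundancy/uniqueness of the defining family of $R_\Delta$ (equivalently, the approximation theorem producing $x\in K$ with $v_\p(x)<0$ and $v_\q(x)\ge 0$ for $\q\ne\p$, which is what forces $\{\q \mid \q\cap W\ne\emptyset\}=\Sigma$ in both directions). With those references supplied, both parts are complete; one could even streamline part (2) by noting that any $x\in W\cap\p$ has $1/x\in R_\Delta$, hence $v_\q(x)=0$ for all $\q\ne\p$, without invoking the full uniqueness statement.
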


\begin{proof}
To prove part (1), it is enough to show that ever flat overring of $R$ is a localization of $R$, due to Proposition~\ref{pr:gperiflat} and Theorem~\ref{thm:Krperi}.  So let $S$ be a flat overring of $R$.  Then by Theorem~\ref{thm:Rich}, $S_\m = R_{\m \cap R}$ for all maximal ideals $\m$ of $S$.  In particular, $S$ is an intersection of localizations of $R$ at prime ideals of $R$.  But recall that Heinzer and Roitman prove \cite[Corollary 2.9]{HeiRo-center} that for a Krull domain with torsion divisor class group, any intersection of localizations of $R$ at prime ideals is in fact a localization of $R$.  Thus, $S$ is a localization of $R$, whence $R$ is globally perinormal.

As for part (2), the following statement was proved independently in \cite[Theorem 2]{Dav-over2}, \cite[Corollary (1)]{Gol-Dedekind}, and \cite[Corollary 2.6]{GilOhm-quot}: \begin{quotation}
 Let $R$ be a Dedekind domain.  Then the class group $\Cl(R)$ of $R$ is torsion if and only if every overring of $R$ is a localization of $R$.
\end{quotation}
But any 1-dimensional Krull domain is a Dedekind domain \cite[Theorem 12.5]{Mats}. Hence, (2) follows.
\end{proof}

\begin{example}
The ring of integers $\cO_K$ of any finite algebraic extension of $K$ of $\Q$ is globally perinormal.  This is because $\cO_K$ is a Dedekind domain (hence Krull) with finite (hence torsion) class group (cf. \cite[Theorem 31]{FrTay-ANT}).  The result then follows from Theorem~\ref{thm:class}.
\end{example}

\begin{example}
If $R_\m$ is globally perinormal for all maximal ideals $\m$, it does not follow that $R$ is globally perinormal, even when $R$ is a Dedekind domain finitely generated over a field.  To see this, let $E$ be any elliptic curve, with Weierstra\ss\ equation $f=0$, considered as an affine curve in $\A^2_\C$.  Then as a group, $E=E(\C)$ is analytically isomorphic (as an algebraic group) to $\C/\Lambda$ for some lattice $\Lambda$ \cite[Corollary VI.5.1.1]{Sil-arithbook}, which in turn is abstractly isomorphic (as a group) to $\R/\Z \times \R/\Z$.  The latter has uncountably many non-torsion elements (namely, whenever either of the two coordinates is irrational).  On the other hand, $E(\C)$ is isomorphic to a particular subgroup (the so-called \emph{degree 0 part}) of the divisor class group of the Dedekind domain $R=\C[X,Y]/(f)$ \cite[Proposition III.3.4]{Sil-arithbook}, as the latter is the affine coordinate ring of $E(\C)$.  Thus, $\Cl(R)$ contains (uncountably many) non-torsion elements, so by Theorem~\ref{thm:class}(2), $R$ is not globally perinormal.  But $R_\m$ is a DVR for any $\m \in \Max R$ (since $R$ is a Dedekind domain), so $R_\m$ is globally perinormal.
\end{example}

On page~\pageref{fig:1}, we have constructed a chart tracking many of the dependencies we have discussed so far.  Note that none of the arrows are reversible, and that a crossed-out arrow indicates a specific non-implication.
\begin{figure}\label{fig:1}
%\begin{center}
\[
\begin{tikzcd}[arrows=Rightarrow]
& & \tikst{Dedekind} \ar{dll} \ar[crossout]{ddr}\\
\tikst{Noetherian\\ normal}\ar{dr} &\tikst{$\cO_K$, where $K$ is an\\algebraic number field}\ar{ur} \ar{r} & \tikst{Krull, with\\ torsion $\Cl(R)$} \ar{dl} \ar{dr} &  \\
& \tikst{Krull}\ar{d}% \ar{dl} \ar{dr}
& & \tikst{globally\\ perinormal} \ar{dl} \\
\tikst{integrally\\closed} \ar{dr}& \tikst{generalized\\Krull} \ar{l} \ar{r}& \tikst{perinormal} \ar{dl} \ar{dr} & \tikst{Pr\"ufer} \ar{l} \ar[crossout]{u}\\
& \tikst{weakly\\ normal} \ar{d} & & \tikst{\Rone} & \\
& \tikst{seminormal}
\end{tikzcd}
\]
%\end{center}
\end{figure}

\section{Some subtleties of the non-Noetherian case}\label{sec:nonNoeth}

As usual, nuances exist for general commutative rings that do not come up when one assumes all rings involved are Noetherian.  We explore some of these in the current section.

\begin{example}\label{ex:Hutchins}
There is a non-Noetherian one-dimensional local integrally closed domain that isn't perinormal.  In fact, any integrally closed one-dimensional local domain that isn't a valuation domain will suffice. For example, let $K/F$ be a purely transcendental field extension, let $X$ be an analytic indeterminate over $K$, let $V := K[\![X]\!]$, and then let $R=F+XV$.  Then $R$ is easily seen to be local with maximal ideal $XV$ and integrally closed (but not completely integrally closed) in its fraction field $K(\!(X)\!)$.  To see that $R$ has dimension 1, let $\p \in \Spec R$ with $0 \subsetneq \p \subsetneq XV$.  Then by Lemma~\ref{lem:locms}, the map $R_\p \rightarrow V_{R \setminus \p}$ is an isomorphism, so that $V_{R\setminus \p}$ is local and hence equals either $V$ or $K(\!(X)\!)$.  The former is impossible since every nonunit of $R$ is a nonunit of $V$, and the latter means that $\p = 0$, which contradicts the assumption.  Hence, $\Spec R = \{0, XV\}$, whence $\dim R=1$.  But $V$ is a going-down local overring of $R$ that is not a localization.
\end{example}

\begin{example}
There exist non-Pr\"ufer, integrally closed integral domains (necessarily non-Noetherian) that are perinormal but not generalized Krull.  %Indeed, one gets this whenever for all $P \in\Spec R$, $R_P$ is either Krull or a valuation domain.

For a concrete example, let $k$ be a field,  $x,y$  indeterminates over $k$, and  $R=k[x,y,\frac{y}{x},\frac{y}{x^2},\frac{y}{x^3},\ldots]$, considered as a subring of $k(x,y)$.   If $\m =xR$, then $\m$ is a maximal ideal of $R$ of height two (see below).  If $P$ is any other maximal  ideal of $R$, then $x\not\in P$.  Thus $R\subseteq k[x,y]_\p$, where $\p = P\cap k[x,y]$. Hence $R_P = k[x,y]_\p$, and so in particular $R_P$ is a Krull domain, whence perinormal.  It is also now clear that $R$ is not Pr\"ufer.  On the other hand, it is known (though apparently not written down) that $R_\m$ is a valuation ring.  Specifically, $R_\m$ is the valuation ring associated to the valuation $\nu$ on $k(x,y)$ with value group $\mathbb{Z}\times \mathbb{Z}$ (ordered lexicographically), where $\nu(x)= (0,1)$ and $\nu(y) = (1,0)$.
We give a brief outline as to why $R_\m=V$, where $V$ is the valuation ring of $\nu$.

Clearly $R\subset V$, since $y/x\in V$.   Moreover the maximal ideal of $V$ is generated by $x$, and so $R_\m \subseteq V$.   For the reverse containment we can write an arbitrary element of $V$ as $f/g$, where $f,g \in k[x,y]$.   Evidently we can assume that $g$ is not divisible by $y$.  Thus $\nu(g) = (0,m)$ for some nonnegative integer $m$.  We can then write $g= x^mh(x)+yp(x,y)$, where $h(x)\in k[x]$ and $p(x,y) \in k[x,y]$.  Since $\nu(f)\geq \nu(g)$,  $v(f) = (n,t)$ where either $n>0$ or $n=0$ and $t\geq m$.   In either case one can show that $f/g$  can be written in the form $F/G$, where $F, G \in k[x,y]$ and $G \notin \m$.
    Thus $f/g \in R_\m$, showing the two rings are equal.

Hence, $R$ is integrally closed. Finally to complete the example we must know that $R$ is not a generalized Krull domain.  However, $\m=xR$ is a principal prime ideal of height two.  Thus $ x^{-1} \in (\bigcap_{\p \in \Spec^1(R)}R_\p)\setminus R$, contradicting the definition of a generalized Krull domain.
\end{example}

%\begin{defn}
%Let $R$ be an integral domain.  We say that $R$ is \emph{nearly Krull}\footnote{We use this term because the terms ``semi-Krull'', ``weakly Krull'', ``almost Krull'', and ``pseudo-Krull" are all taken.} if the following three conditions hold: \begin{enumerate}
%\item $R_\p$ is a valuation domain for all $\p \in \Spec^1(R)$,
%\item $R = \bigcap_{\p \in \Spec^1(R)} R_\p$, and
%\item (Finite character) For each $r\in R$, there are only finitely many $\p \in \Spec^1(R)$ such that $r R_\p \neq R_\p$.
%\end{enumerate}
%\end{defn}

%\begin{prop}\label{pr:nKr}
%Let $R$ be a nearly Krull domain.  Then so is $R_Q$ for any prime ideal $Q$ and $R[X]$ for any indeterminate $X$ over $R$.
%\end{prop}

%\begin{proof}
%The proofs of the corresponding facts for Krull domains (e.g. in \cite[Proposition 6 on p.483 and Proposition 13 on  p.488]{Bour-CA}) may be imitated, \emph{mutatis mutandis}, to prove the current Proposition.
%\end{proof}

%\begin{thm}\label{thm:nKrperi}
%Any nearly Krull domain is perinormal.
%\end{thm}

%\begin{proof}
%In light of Proposition~\ref{pr:nKr}, the proof of Theorem~\ref{thm:Krperi} only requires a nearly Krull domain.
%\end{proof}

\begin{example}
Let $V$ be any rank 1 valuation ring and $n \in \N$.  Recall that generalized Krull domains are closed under finite polynomial extension \cite[Theorem 43.11(3)]{Gil-MIT}.  Thus, $V[X_1, \ldots, X_n]$ is a generalized Krull domain (since $V$ is obviously generalized Krull), hence perinormal (by Theorem~\ref{thm:Krperi}).  This provides a large class of examples of perinormal domains, of arbitrary Krull dimension, that are neither Krull nor Pr\"ufer, even locally.
\end{example}

\section{Questions}\label{sec:Q}
We close with an incomplete but intriguing list of questions suitable for further research on perinormality and global perinormality.

\begin{question}
Let $k$ be a field, and let $X,Y,Z,W$ be indeterminates over that field.   Is the normal hypersurface $R=k[X,Y,Z,W] / (XW-YZ)$ globally perinormal or not?  Note that its divisor class group is well-known to be infinite cyclic \cite[Proposition 14.8]{Fos-Cl}. If ``yes", this answer would mean that the converse to Theorem~\ref{thm:class}\emph{\ref{it:classglobal}} is false in dimension 3.  If ``no", this answer would provide evidence that the converse may be true.
%The part of the proof of Theorem~\ref{thm:class}\emph{\ref{it:classglobal1}} that fails for this ring $R$ is showing that $R_\Delta$ satisfies going-down over $R$.  Indeed, this line of proof cannot work in general. To see this, let $P := (x,y)R$.  Note that this is a height one prime in $R$, since any prime ideal of $R$ that contains $x$ must contain either $y$ or $z$ as well.  Set $\Delta = \Spec^1(R) \setminus \{P\}$.  Then $R_\Delta = k[x,y,z/x]$, isomorphic to a polynomial ring in three variables, so that $PR_\Delta$ has height 2 even though it lies over the height one prime $P$, violating going-down.  However, this doesn't preclude the existence of \emph{some} overring of $R$ that satisfies going-down without being a localization.
\end{question}

\begin{question}
Let $R$ be a perinormal domain, $K$ its fraction field, $L$ a (finite?) extension field of $K$, and $S$ the integral closure of $R$ in $L$.  Is $S$ perinormal?

In the non-Noetherian case, this question is interesting even when we further stipulate that $R$ is integrally closed.
\end{question}

\begin{question}
Let $R$ be an integral domain and $X$ an indeterminate over $R$.  What can one say about the perinormality of $R$ in relation to the perinormality of $R[X]$?  Does one imply the other?
\end{question}

\begin{question}
Let $R$ be a Noetherian local domain whose completion $\hat R$ is also a domain.  If $R$ is perinormal, is $\hat R$ perinormal as well?  What about the converse?
\end{question}

\begin{question} Is every completely integrally closed domain perinormal? \end{question}

\section*{Acknowledgments}
We wish to thank David Dobbs, Tiberiu Dumitrescu, Alan Loper, Karl Schwede, and Dana Weston for interesting and useful conversations at various stages of the project.  We also wish to thank the referee for many useful comments and improvements, especially for Propositions~\ref{pr:periflat} and \ref{pr:gperiflat}, which are due to the referee.
\providecommand{\bysame}{\leavevmode\hbox to3em{\hrulefill}\thinspace}
\providecommand{\MR}{\relax\ifhmode\unskip\space\fi MR }
% \MRhref is called by the amsart/book/proc definition of \MR.
\providecommand{\MRhref}[2]{%
  \href{http://www.ams.org/mathscinet-getitem?mr=#1}{#2}
}
\providecommand{\href}[2]{#2}

\end{document}